\documentclass[a4paper,12pt]{article}

\usepackage[utf8]{inputenc}
\usepackage[T1]{fontenc}
\usepackage{amsthm,amsmath,amssymb,amsfonts}
\usepackage{graphicx}
\usepackage[usenames,dvipsnames,svgnames,table]{xcolor}
\usepackage[colorlinks=true, pdfstartview=FitV,linkcolor=ForestGreen,citecolor=ForestGreen, urlcolor=blue]{hyperref}
\usepackage{a4wide}
\usepackage{wrapfig}

\newcommand{\dd}{\, \mathrm{d}}
\newcommand{\dx}{\, \mathrm{d}x}

\newcommand{\dv}{\, \mathrm{d}v}
\newcommand{\dz}{\, \mathrm{d}z}

\newcommand{\dt}{\, \mathrm{d}t}

\newcommand{\R}{\mathbb{R}}
\newcommand{\D}{\mathcal{D}}

\DeclareMathOperator{\osc}{osc}
\DeclareMathOperator{\esssup}{ess-sup}

\newcommand{\eps}{\epsilon}
\newcommand{\un}{\mathbb{I}}
\newcommand{\Qext}{Q_{\mathrm{ext}}}
\newcommand{\Qbext}{\mathcal{Q}_{\mathrm{ext}}}
\newcommand{\Qtext}{\tilde{Q}_{\mathrm{ext}}}
\newcommand{\Qzero}{Q_{\mathrm{zero}}}
\newcommand{\Qpos}{Q_{\mathrm{pos}}}

\newcommand{\Qint}{Q_{\mathrm{int}}}

\newcommand{\LK}{\mathcal{L}_K}

\newcommand{\Cwhi}{C_{\text{w.h.i.}}}

\newtheorem{definition}{Definition}[section]
\newtheorem{theorem}{Theorem}[section]
\newtheorem{proposition}{Proposition}[section]

\newtheorem{lemma}{Lemma}[section]

\theoremstyle{remark}
\newtheorem{remark}{Remark}

\title{\bf Log-transform and the weak Harnack inequality for kinetic
  Fokker-Planck equations} \author{Jessica Guerand \& Cyril Imbert}
\begin{document}
\maketitle

\begin{abstract}
  This article deals with kinetic Fokker-Planck equations with
  essentially bounded coefficients. A weak Harnack inequality for
  non-negative super-solutions is derived by considering their
  Log-transform and following S.~N.~Kruzhkov (1963).  Such a result
  rests on a new weak Poincar\'e inequality sharing similarities with
  the one introduced by W.~Wang and L.~Zhang in a series of works
  about ultraparabolic equations (2009, 2011, 2017). This functional
  inequality is combined with a classical covering argument recently
  adapted by L.~Silvestre and the second author (2020) to kinetic
  equations.
\end{abstract}

\section{Introduction}

This paper is concerned with local properties of solutions of linear
kinetic equations of Fokker-Planck type in some cylindrical domain $Q^0$
\begin{equation}
\label{e:main}
(\partial_t  + v \cdot \nabla_x) f = \nabla_v \cdot ( A \nabla_v f) + B \cdot \nabla_v f + S
\end{equation}
 assuming that the diffusion matrix
$A$ is uniformly elliptic and $B$ and $S$ are essentially bounded:
there exist $\lambda,\Lambda>0$ such that for almost every
$(t,x,v) \in Q^0$,
\begin{equation}
  \label{e:ellipticity}
  \begin{cases}   \text{eigenvalues of } A(t,x,v) = A^T(t,x,v) \text{ lie in } [\lambda,\Lambda], \\
\text{the vector field $B$ satisfies: } |B(t,x,v)| \le \Lambda.
\end{cases}
\end{equation}
In particular, coefficients do not enjoy further regularity such as
continuity, vanishing mean oscillation \textit{etc}. For this reason,
coefficients are said to be \emph{rough}.

\subsection{Main result}

We classically reduce the local study to the case where $Q^0$ is at
unit scale. For some reasons we expose below, $Q^0$ takes the form
$(-1,0]\times B_{R_0} \times B_{R_0}$ for some large constant $R_0$
only depending on dimension and the ellipticity constants
$\lambda,\Lambda$ in \eqref{e:ellipticity}.

Before stating our main result, we give the definition of (weak)
super-solutions in a \emph{cylindrical open set} set $\Omega$, that is
to say an open set of the form $I \times B^x \times B^v$.  A function
$f \colon \Omega \to \R$ is a weak super-solution of \eqref{e:main} in
$\Omega$ if
$f \in L^{\infty} (I, L^2 (B^x \times B^v)) \cap L^2 (I \times B^x,
  H^1(B^v))$ and
  $(\partial_t + v \cdot \nabla_x) f \in L^2(I \times B^x,
  H^{-1}(B^v))$ and for all non-negative $\varphi \in \D (\Omega)$,
  \[ 
- \int_{Q^0} f  (\partial_t + v \cdot \nabla_x) \varphi \dz \ge 
- \int_{Q^0}  A \nabla_v f \cdot  \nabla_v \varphi  \dz + \int_{Q^0} (B \cdot \nabla_v f+S) \varphi  \dz.
\]
 
\begin{theorem}[weak Harnack inequality]\label{t:weak-harnack}
  Let $Q^0 = (-1,0] \times B_{R_0} \times B_{R_0}$ and $A,B$ satisfy
  \eqref{e:ellipticity} and $S$ be essentially bounded in $Q^0$.  Let
  $f$ be a non-negative super-solution of \eqref{e:main} in
some cylindrical open set $\Omega \supset Q^0$. Then
  \[ \left( \int_{Q_-} f^p (z) \dz\right)^{\frac1p} \le C \left(
      \inf_{Q_+}f + \|S\|_{L^\infty (Q_+)} \right)\] where
  $Q_+ = (-\omega^2,0] \times B_{\omega^3} \times B_\omega$ and
  $Q_- = (-1, -1+\omega^2] \times B_{\omega^3} \times B_\omega$; the
  constants $C$, $p$, $\omega$ and $R_0$ only depend on the dimension and the ellipticity constants $\lambda, \Lambda$.
\end{theorem}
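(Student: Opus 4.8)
The plan is to follow S.~N.~Kruzhkov's log-transform argument, replacing the Poincar\'e inequality and the covering of the parabolic proof by their kinetic counterparts: a new weak Poincar\'e inequality --- the principal new functional-analytic ingredient, which I would establish first --- and the ``ink-spots'' covering lemma adapted to kinetic cylinders by L.~Silvestre and the second author. Granting the weak Poincar\'e inequality, I first reduce to a scale-invariant statement. Replacing $f$ by $\bar f:=f+\|S\|_{L^\infty(Q_+)}+\eps$ --- still a super-solution of \eqref{e:main} with the same coefficients, now bounded below by $\eps>0$ --- and using $f\le\bar f$ together with $\inf_{Q_+}\bar f=\inf_{Q_+}f+\|S\|_{L^\infty(Q_+)}+\eps$, it suffices to prove $\int_{Q_-}\bar f^{\,p}\dz\le C\,(\inf_{Q_+}\bar f)^{p}$ uniformly in $\eps$ and then let $\eps\to0$. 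Setting $w:=\ln\bar f-\inf_{Q_+}\ln\bar f$, so that $\inf_{Q_+}w=0$ and $\bar f^{\,p}=(\inf_{Q_+}\bar f)^{p}e^{pw}$, the goal becomes $\int_{Q_-}e^{pw}\dz\le C$ for some universal $p>0$. A Steklov-averaging argument justifies testing the super-solution inequality for $\bar f$ against $\varphi/\bar f$ with $\varphi\ge0$; using the identity $\bar f^{-1}\nabla_v\cdot(A\nabla_v\bar f)=\nabla_v\cdot(A\nabla_v w)+A\nabla_v w\cdot\nabla_v w$, the bounds \eqref{e:ellipticity} and $|S|/\bar f\le1$, one obtains that $w$ is a weak super-solution of
\[
(\partial_t+v\cdot\nabla_x)w-\nabla_v\cdot(A\nabla_v w)\;\ge\;\tfrac{\lambda}{2}\,|\nabla_v w|^2-C_0,\qquad C_0=C_0(\lambda,\Lambda),
\]
the quadratic absorption being the gain produced by the concavity of the logarithm.

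The next task is to turn this absorbing inequality into a decay of the super-level sets of $w$. Testing it against $\phi^2$, with $\phi$ a cutoff between two kinetic cylinders $Q'\Subset Q''$ built from the group law and the anisotropic scaling ($t\sim r^2$, $x\sim r^3$, $v\sim r$) underlying $Q_\pm$, and absorbing $A\nabla_v w\cdot\nabla_v(\phi^2)$ into the quadratic gain, gives the energy estimate $\int_{Q'}|\nabla_v w|^2\dz\le C\bigl(1+\int_{Q''}|w|\,\dz\bigr)$: the velocity dissipation of $w$ is controlled \emph{linearly in $w$} by its mass on a slightly larger cylinder, and this linearity is exactly what makes the gain per level a fixed amount, hence the decay exponential. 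Applying the weak Poincar\'e inequality to the truncations $(w-\ell)_+$ and feeding in the energy estimate and the absorbing inequality, one reaches a De Giorgi-type step in the spirit of an expansion of positivity: if $\{w\ge\ell\}$ occupies a definite share of the measure of a kinetic cylinder, then $w\ge\ell-K$ on a related cylinder positioned later in time, \emph{at the price of a defect carried by an enlarged cylinder} $Q_{Kr}$, with $K$ universal. The need to pass to a larger cylinder is precisely the ``weak'' feature shared with the inequality of W.~Wang and L.~Zhang, and it is what forbids a naive nested iteration.

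To iterate in spite of this loss of scale, cover the space-time slab separating $Q_-$ from $Q_+$ by small kinetic cylinders and invoke the kinetic covering lemma: chaining the local step above along the covering --- which is where the ink-spots lemma is essential, since the weak loss rules out a nested iteration --- and using that iterating towards $Q_+$ would force $w\ge\ell-mK$ on $Q_+$, which is incompatible with $\inf_{Q_+}w=0$ once $\ell$ exceeds $mK$, one obtains the tail bound $|\{w\ge\ell\}\cap Q_-|\le C\,e^{-c\ell}\,|Q_-|$ for all $\ell\ge0$, with $c>0$ universal, the covering lemma guaranteeing that the defects of the successive applications are summable rather than cumulative. Here $R_0$ is taken large enough that every enlarged cylinder $Q_{Kr}$ used along the way stays inside $Q^0$, and $\omega$ is fixed so that $Q_-$ and $Q_+$ are the two temporal ends of the covered slab. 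Integrating the tail bound yields $\int_{Q_-}e^{pw}\dz\le C$ for any $0<p<c$, which is the scale-invariant estimate; undoing the reductions and letting $\eps\to0$ proves Theorem~\ref{t:weak-harnack}, with $C$, $p$, $\omega$, $R_0$ depending only on the dimension and $\lambda,\Lambda$.

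The decisive obstacle is not this De Giorgi-type iteration, which is by now fairly standard once its inputs are available, nor the log-transform and the energy estimate, which are mechanical once the weak formulations and the admissibility of the test functions $\varphi/\bar f$ and $\phi^2$ are justified by regularization. It is, first, to prove the weak Poincar\'e inequality itself: one must extract from the genuinely degenerate structure of \eqref{e:main} --- coercivity only in $v$, transport in $(t,x)$ --- a control valid in all variables, which hypoellipticity permits only in the weak form, with an enlargement of cylinders and a controlled defect. It is, second, to dovetail that inequality with the kinetic covering lemma so that the accumulated defects really are summable: the shapes and scales of the cylinders and the number of covering generations must be calibrated against the constants produced by the weak Poincar\'e inequality, and this calibration is the subtle heart of the whole scheme.
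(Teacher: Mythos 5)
Your plan follows essentially the same route as the paper: a Kruzhkov-style log-transform whose energy estimate is \emph{linear} in the mass of the transformed function, a weak Poincar\'e inequality converting a measure bound in the past into an expansion of positivity in a later cylinder, the kinetic ink-spots covering to obtain geometric decay of the super-level sets of $f$ on $Q_-$, and the layer-cake formula. The only cosmetic difference is that you work with $w=\ln\bar f-\inf_{Q_+}\ln\bar f$, whereas the paper applies a regularized version $G$ of $\max(0,-\ln)$ to $f+\eps$ (which is bounded and lets the measure hypothesis of the weak Poincar\'e inequality be read off directly from $\{f\ge 1\}$); the logical structure is identical.
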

\begin{remark}
  Combined with the fact that non-negative sub-solutions are locally
  bounded \cite{pp}, the weak Harnack inequality implies the Harnack
  inequality proved in \cite{gimv}, see Theorem \ref{t:harnack}
\end{remark}
\begin{remark}
  The proof of Theorem~\ref{t:weak-harnack} is constructive. As a consequence,
  it provides a constructive proof of the Harnack inequality from \cite{gimv}.  
\end{remark}
\begin{remark}
  The (weak) Harnack inequality  implies  H\"older regularity of weak
  solutions.
\end{remark}
\begin{remark}
  Such a weak Harnack inequality can be generalized to the
  ultraparabolic equations with rough coefficients considered for
  instance in \cite{pp,wz09,wz11,wz17}. 
\end{remark}
\begin{remark}
  This estimate can be scaled and stated in arbitrary cylinders thanks
  to Galilean and scaling invariances of the class of equations of the
  form~\eqref{e:main}. These invariances are recalled at the end of
  the introduction.
\end{remark}
\begin{remark}
  As in \cite{gimv,is}, the radius $\omega$ is small enough so that
  when ``stacking cylinders'' over a small initial one contained in
  $Q_-$, the cylinder $Q_+$ is captured, see Lemma~\ref{l:cylinder}.
  As far as $R_0$ is concerned, it is large enough so that it is
  possible to apply the expansion of positivity lemma (see
  Lemma~\ref{l:pop}) can be applied to every stacked cylinder. 
\end{remark}

\subsection{Historical background and motivations}

The weak Harnack inequality from our main theorem and the techniques
we develop to establish it are deeply rooted in the large literature
about elliptic and parabolic regularity, both in divergence and
non-divergence form.

\paragraph{De Giorgi's theorem and Harnack inequality.}

E. De Giorgi proved that solutions of elliptic equations in divergence
form with rough coefficients are locally H\"older continuous
\cite{DG56,DG57}. This regularity result for linear equations allowed
him to solve Hilbert's 19\textsuperscript{th} problem by proving the
regularity of a non-linear elliptic equation. The case of parabolic
equations was addressed by J.~Nash in \cite{nash}. Then J.~Moser
\cite{zbMATH03179707,moser} showed that a Harnack inequality can be
derived for non-negative solutions of elliptic and parabolic equations
with rough coefficients by considering the logarithm of positive
solutions. The proof of E. De Giorgi applies not only to solutions of
elliptic equations but also to functions in what is now known as the
elliptic De Giorgi class. Parabolic De Giorgi classes were then
introduced in particular in \cite{L1}.

\paragraph{The log-transform.} While the proof of the continuity of
solutions for parabolic equations by J. Nash \cite{nash} includes the
study of the ``entropy'' of the solution, related to its logarithm,
the proof of the Harnack inequality for parabolic equations by Moser
\cite{moser} relies in an essential way on the observation that the
logarithm of the solution of a parabolic equation in divergence form
satisfies an equation with a dominating quadratic term in the left
hand side. This observation is then combined with a lemma that is the
parabolic counterpart of a result by F.~John and L.~Nirenberg about
functions with bounded mean oscillation. Soon afterwards,
S.~N.~Kruzhkov observes that the use of this lemma can be avoided
thanks to a Poincaré inequality due to Sobolev, see
\cite[Eq.~(1.18)]{kru63,kru64}.

\paragraph{Weak Harnack inequality.} Moser \cite{moser} and then
Trudinger \cite[Theorem 1.2]{zbMATH03256735} proved a weak Harnack
inequality for parabolic equations. Lieberman \cite{MR1465184} makes
the following comment: ``\textit{It should be noted [\dots] that Trudinger was
the first to recognize the significance of the weak Harnack inequality
even though it was an easy consequence of previously known
results.} [...] '' He also mentions that DiBenedetto and Trudinger
\cite{zbMATH03901241} showed that non-negative functions in the
elliptic De Giorgi class corresponding to super-solutions of elliptic
equations, satisfy a weak Harnack inequality and G.~L.~Wang
\cite{MR1032780,MR1246215} proves a weak Harnack inequality for
functions in the corresponding parabolic De Giorgi class.

\paragraph{Parabolic equations in non-divergence form.}
N.~V.~Krylov and M.~V.~Safonov \cite{MR563790} derived a Harnack
inequality for equations in non-divergence form. In order to do so,
they introduce a covering argument now known as the Ink-spots theorem,
see for instance \cite{zbMATH06233951}. Such a covering argument will
be later used in the various studies of elliptic equations in
divergence form, see \textit{e.g.} \cite{MR1032780,MR1246215} or
\cite{zbMATH03901241}.

\paragraph{Expansion of positivity.}
Ferretti and Safonov \cite{zbMATH01698711} establish the interior
Harnack inequality for both elliptic equations in divergence and
non-divergence form by establishing what they call growth lemmas,
allowing to control the behavior of solutions in terms of the measure
of their super-level sets.

Gianazza and Vespri introduce in \cite{MR2232212} suitable homogeneous
parabolic De Giorgi classes of order $p$ and prove a Harnack
inequality.  They shed light on the fact that their main technical
point is a \emph{expansion of positivity} in the following sense: if a
solution lies above $\ell$ in a ball $B_\rho(x)$ at time $t$, then it
lies above $\mu \ell$ in a ball $B_{2\rho} (x)$ at time $t+C \rho^p$
for some universal constants $\mu$ and $C$, that is to say constants
only depending on dimension and ellipticity constants. They 
mention that G.~L. Wang also used some expansion of positivity in
\cite{MR1032780}.

More recently, R.~Schwab and L.~Silvestre \cite{zbMATH06600511} used
such ideas in order to derive a weak Harnack inequality for parabolic
integro-differential equations with very irregular kernels.

\paragraph{Hypoellipticity.}

In the case where $A$ is the identity matrix, Equation~\eqref{e:main}
was first studied by Kolmogorov \cite{kolmogorov}. He exhibited a
regularizing effect despite the fact that diffusion only occurs in the
velocity variable. This was the starting point of the hypoellipticity
theory developed by H\"ormander \cite{MR0222474} for equations with
smooth variable coefficients (unlike $A$ and $B$ in \eqref{e:main}).

\paragraph{Regularity theory for ultraparabolic equations.}

The elliptic regularity for degenerate Kolmogorov equations in
divergence form with discontinuous coefficients, including
\eqref{e:main} with $B=0$, started at the end of the years 1990 with
contributions including
\cite{zbMATH00897958,zbMATH01220834,zbMATH01203628,zbMATH01610716}.
As far as the rough coefficients case is concerned, A.~Pascucci and
S.~Polidoro \cite{pp} proved that weak (sub)solutions of
\eqref{e:main} are locally bounded (from above). This result was later
extended in \cite{zbMATH05236721,zbMATH07140155}. Then W.~Wang and
L.~Zhang \cite{wz09,wz11,wz17} proved that solutions of \eqref{e:main} are
H\"older continuous. Even if the authors do not state their result as
an a priori estimate, it is possible to derive from their proof the
following result for a class of ultraparabolic equations. Such a class
containes equations of the form \eqref{e:main} with $B=0$.
More recently,  M.~Litsgård and K.~Nyström \cite{LN20} established existence and uniqueness results for the Cauchy Dirichlet problem for Kolmogorov-Fokker-Planck type equations with rough coefficients. 
\begin{theorem}[H\"older regularity -- \cite{wz09,wz11,wz17}] \label{t:holder}
  There exist $\alpha \in (0,1)$  only depending on dimension,
  $\lambda$ and $\Lambda$ such that all weak solution $f$  of
  \eqref{e:main} in some cylindrical open set $\Omega \supset Q_1=(-1,0] \times B_1 \times B_1$ satisfies
  \[
    [f]_{C^\alpha (Q_{1/2})} \le C (\|f\|_{L^2 (Q_1)} + \|S\|_{L^\infty(Q_1)})
  \]
  with $Q_{1/2} = (-1/4,0] \times B_{1/8} \times B_{1/2}$; the
  constant $C$ only depends on the dimension and the ellipticity constants
  $\lambda, \Lambda$.
\end{theorem}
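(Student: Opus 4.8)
The plan is to deduce H\"older regularity from the weak Harnack inequality of Theorem~\ref{t:weak-harnack} by the classical De~Giorgi--Nash--Moser oscillation-decay scheme, using the local boundedness of sub-solutions from \cite{pp} only to convert the final right-hand side into an $L^2$ norm. First I would note that if $f$ solves \eqref{e:main} then $\pm f$ are both weak sub-solutions (with sources $\pm S$), so by \cite{pp}, after a harmless preliminary kinetic rescaling so that a cylinder of the shape $Q^0=(-1,0]\times B_{R_0}\times B_{R_0}$ fits inside $\Omega$, one has $\|f\|_{L^\infty(Q^0)}\le C(\|f\|_{L^2(Q_1)}+\|S\|_{L^\infty(Q_1)})$. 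It then suffices to prove a pure oscillation estimate: there are $\alpha\in(0,1)$ and $C$, universal, with $\osc_{Q_r(z_0)}f\le C r^\alpha(\|f\|_{L^\infty(Q^0)}+\|S\|_{L^\infty})$ for every $z_0\in Q_{1/2}$ and every small $r$, the oscillation being measured on kinetic cylinders; on the compact set $Q_{1/2}$ this is equivalent to the claimed $C^\alpha$ bound (possibly with a smaller exponent).

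The core is a single step of oscillation decay. Working on a cylinder of the shape of $Q^0$, set $M=\sup_{Q^0}f$, $m=\inf_{Q^0}f$, $\osc=M-m$. The functions $M-f$ and $f-m$ are non-negative solutions of \eqref{e:main} with sources $-S$ and $S$, hence non-negative super-solutions, and they sum to the constant $\osc$; thus at almost every point of $Q_-$ one of them is $\ge\osc/2$, so one of the two super-level sets occupies at least half of $Q_-$ in measure. Applying Theorem~\ref{t:weak-harnack} to whichever of $M-f$, $f-m$ it is, call it $g$, gives
\[
\Big(\tfrac12|Q_-|\Big)^{1/p}\tfrac{\osc}{2}\ \le\ \Big(\int_{Q_-}g^p\Big)^{1/p}\ \le\ C\Big(\inf_{Q_+}g+\|S\|_{L^\infty(Q^0)}\Big),
\]
whence, in either case, $\osc_{Q_+}f\le(1-c_0)\osc+C\|S\|_{L^\infty(Q^0)}$ for some universal $c_0\in(0,1)$.

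Now $Q_+=(-\omega^2,0]\times B_{\omega^3}\times B_\omega$ is, up to the kinetic dilation of ratio $\omega$, a copy of $Q_1$ sitting at the top time-slice $\{t=0\}$ of $Q^0$; combining the previous inequality with the scaling invariance (under which the source norm picks up a factor $r^2$) and with a time translation plus a Galilean change of variables recentering at an arbitrary $z_0\in Q_{1/2}$, one produces a nested sequence of kinetic cylinders $Q^{(k)}(z_0)$ of size $\sim\sigma^k$ (with $\sigma\in(0,1)$ fixed) satisfying $\osc_{Q^{(k)}(z_0)}f\le(1-c_0)^k\osc_0+C\|S\|_{L^\infty}\sum_{j<k}(1-c_0)^{k-1-j}\sigma^{2j}$, where $\osc_0=\osc_{Q^0}f$. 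Both contributions are geometrically small, so a standard iteration lemma converts this into $\osc_{Q_r(z_0)}f\le Cr^\alpha(\osc_0+\|S\|_{L^\infty})$, uniformly in $z_0\in Q_{1/2}$; together with the $L^2$ bound from the first paragraph this gives the theorem.

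The main obstacle is not any single estimate but the geometric bookkeeping of the iteration: because Theorem~\ref{t:weak-harnack} links the two \emph{disjoint} cylinders $Q_-$ (near time $-1$) and $Q_+$ (near time $0$), which moreover live at different space scales, one must check that the rescaled, recentered cylinders $Q^{(k)}(z_0)$ genuinely contract to $z_0$ and stay inside $\Omega$ at every step --- this is exactly what forces the large $R_0$ and the small $\omega$ (cf.\ the last remark after Theorem~\ref{t:weak-harnack}) --- and one must correctly accumulate the $\|S\|_{L^\infty}$ terms along the geometric series and control the passage from the kinetic quasi-metric to the Euclidean distance. Verifying that $M-f$ and $f-m$ are admissible weak super-solutions in the precise functional sense required, and that \cite{pp} applies after the preliminary rescaling, are the remaining routine points.
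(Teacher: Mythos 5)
Your argument is correct, but it is not the route taken in Appendix~\ref{a:holder}. You derive the one-step oscillation decay from the weak Harnack inequality (Theorem~\ref{t:weak-harnack}), whereas the paper bypasses that theorem entirely: the decrease of oscillation (Lemma~\ref{l:osc}) is obtained directly from the expansion-of-positivity Lemma~\ref{l:pop} with $\theta=1$. The measure dichotomy is the same in both proofs --- one of $M-f$, $f-m$ lies above half the oscillation on at least half of a fixed past cylinder --- but the paper feeds that information straight into Lemma~\ref{l:pop} (measure lower bound in the past implies a pointwise lower bound in $Q_1$), while you feed it into the $L^p$ left-hand side of the weak Harnack inequality and read off $\inf_{Q_+}$. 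The subsequent ingredients coincide: the preliminary $L^\infty$ reduction via Proposition~\ref{p:l2linfty}, the scaling/translation iteration recentred at each $z_0$, and the $r^2$ gain on the source term along the geometric series. What your route costs is the full machinery behind Theorem~\ref{t:weak-harnack}, in particular the Ink-spots covering theorem; what the paper's route buys is the observation that H\"older regularity needs only the single expansion-of-positivity lemma, which is precisely the point the appendix is written to emphasize. Your version is logically sound (there is no circularity, since the weak Harnack inequality is proved independently of Theorem~\ref{t:holder}), and the issues you flag as remaining --- non-negativity of $M-f$ and $f-m$ on the full cylinder where the super-solution property is invoked, and the bookkeeping caused by the shape mismatch between $Q^0$, $Q_-$ and $Q_+$ --- are indeed the only routine points left to check.
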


\paragraph{Linear kinetic equations with rough coefficients.}

Since the resolution of the 19\textsuperscript{th} Hilbert problem by
E. de Giorgi \cite{DG57}, it is known that being able to deal with
coefficients that are merely bounded is of interest for studying
non-linear problems. There are several models from the kinetic theory
of gases related to equations of the form \eqref{e:main} with $A$, $B$
and $S$ depending on the solution itself. The most famous and
important example is probably the Landau equation
\cite{landau1980statistical}. 

An alternative proof of the H\"older continuity Theorem~\ref{t:holder}
was proposed by F.~Golse, C.~Mouhot, A.~F.~Vasseur and the second author
\cite{gimv} and a Harnack inequality was obtained.
\begin{theorem}[Harnack inequality -- \cite{gimv}] \label{t:harnack}
  Let $f$ be a non-negative weak solution of \eqref{e:main} in some
  cylindrical open set $\Omega \supset Q^0 := (-1,0] \times B_{R_0} \times
    B_{R_0}$. Then
  \[
    \sup_{Q_-} f \le C \left( \inf_{Q_+} f + \| S \|_{L^\infty(Q^0)} \right)
  \]
  where $Q_+ = (-\omega^2,0] \times B_{\omega^3} \times B_\omega$ and
  $Q_- = (-1,-1+\omega^2] \times B_{\omega^3} \times B_\omega$; the
  constants $C$ and $\omega$ only depend on the dimension and the ellipticity
  constants $\lambda, \Lambda$. 
\end{theorem}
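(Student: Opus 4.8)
The plan is to obtain Theorem~\ref{t:harnack} from the weak Harnack inequality of Theorem~\ref{t:weak-harnack}, combined with the local boundedness of non-negative sub-solutions of \eqref{e:main} proved by Pascucci and Polidoro \cite{pp}, following the classical two-sided strategy of Moser. First I would note that a non-negative weak solution $f$ of \eqref{e:main} is at the same time a weak super-solution and a weak sub-solution, since for solutions the defining differential inequality holds as an equality. Viewing $f$ as a super-solution, Theorem~\ref{t:weak-harnack} provides a universal exponent $p>0$ (in general small) and cylinders $Q_\pm$ with
\[
  \Big(\int_{Q_-} f^p \dz\Big)^{1/p} \le C\big(\inf_{Q_+} f + \|S\|_{L^\infty(Q^0)}\big).
\]

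Viewing $f$ instead as a non-negative sub-solution, \cite{pp} gives a local $L^\infty$ bound: for nested kinetic cylinders $Q' \subset\subset Q'' \subset Q^0$ one has $\sup_{Q'} f \le C(\|f\|_{L^2(Q'')} + \|S\|_{L^\infty(Q'')})$. I would then lower the integrability exponent on the right-hand side from $2$ down to the small $p$ above by the standard De~Giorgi--Moser interpolation trick, bounding $\|f\|_{L^2(Q'')}$ by $(\sup_{Q''} f)^{1-p/2}\|f\|_{L^p(Q'')}^{p/2}$, applying Young's inequality to absorb a fixed fraction of the supremum, and iterating over a sequence of radii; this yields $\sup_{Q'} f \le C(\|f\|_{L^p(Q'')} + \|S\|_{L^\infty(Q'')})$.

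It then remains to chain the two estimates, and the only real work is in the geometric bookkeeping: I would fix the universal parameters $\omega$ and $R_0$ in Theorem~\ref{t:harnack} so that the cylinder $Q_-$ over which the supremum is taken is compactly contained in a cylinder $\widetilde Q_-$ which is itself the $Q_-$ produced by Theorem~\ref{t:weak-harnack} at a slightly larger radius and still sits well inside $Q^0$ --- both cylinders have the form $(-1,-1+r^2]\times B_{r^3}\times B_r$, and the kinetic scaling and Galilean invariances of the class \eqref{e:main} recalled in the introduction let us change $r$ while keeping the ellipticity constants $\lambda,\Lambda$ fixed. With such a choice, local boundedness bounds $\sup_{Q_-} f$ by $C(\|f\|_{L^p(\widetilde Q_-)} + \|S\|_{L^\infty(Q^0)})$, while Theorem~\ref{t:weak-harnack} bounds $\|f\|_{L^p(\widetilde Q_-)}$ by $C(\inf_{Q_+} f + \|S\|_{L^\infty(Q^0)})$, and combining the two gives the claim with $C$, $\omega$, $R_0$ depending only on the dimension and on $\lambda,\Lambda$. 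I expect this last step --- arranging the chain of cylinders so that the supremum is genuinely over the prescribed $Q_-$ while leaving the room both estimates need --- to be the main obstacle, the rest being a direct consequence of Theorem~\ref{t:weak-harnack} and of \cite{pp}.
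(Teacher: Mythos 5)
Your proposal is correct and is essentially the paper's own argument: the paper proves Theorem~\ref{t:harnack} by combining Proposition~\ref{p:l2linfty} (local boundedness of sub-solutions) with Theorem~\ref{t:weak-harnack} and rescaling, referring to the literature for the standard interpolation/iteration step that lowers the exponent from $2$ to the small $p$ of the weak Harnack inequality. Your write-up simply makes explicit the $L^2$-to-$L^p$ absorption argument and the cylinder bookkeeping that the paper leaves to the cited reference.
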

Such a Harnack inequality implies in particular the strong maximum
principle \cite{zbMATH07118486} relying on a geometric construction
known as Harnack chains.  The H\"older regularity result of
\cite{wz09} was extended by Y. Zhu \cite{zhu2020velocity} to general
transport operators $\partial_t + b(v)\cdot \nabla_v$ for some non-linear
function $b$.

To finish with, we mention that C.~Mouhot and the second author
\cite{imbert2018toy} initiated the study of a toy non-linear model and
F. Anceschi and Y. Zhu continued it in \cite{anceschi-zhu}. Both
studies rely in an essential way on H\"older continuity of  weak
solutions to the linear equation \eqref{e:main}.

\paragraph{A functional analysis point of view.} 

The functional analysis framework used in \cite{gimv} was clarified by
S. Armstrong and J.-F. Mourrat in \cite{am19}. They show that it is
sufficient to control $f$ in $L^2_{t,x} H^1_v$ and
$(\partial_t+ v \cdot \nabla_x f) \in L^2_{t,x} H^{-1}_v$ (locally) to
derive new Poincaré inequalities.

There is an interesting bridge between the functional analysis point
of view and the PDE one. Indeed, under the assumptions the authors of
\cite{am19} work with, it is possible  to consider the Kolmogorov
equation
\[
  \LK f := (\partial_t + v \cdot \nabla_x f) - \Delta_v f = H
\]
with $H \in L^2_{t,x}H^{-1}_v$. In \cite{gimv}, the equation is
rewritten under the equivalent form $\LK f = \nabla_v \cdot H_1 +H_0$.
But in order to derive local properties of solutions such as their
H\"older continuity by elliptic regularity methods, it is necessary to
be able to work with \emph{sub-solutions} of the Kolmogorov
equation. In this case $\LK f = H - \mu$ where $\mu$ is an arbitrary
Radon measure. Such additional terms are difficult to deal with in the
functional analysis framework presented in \cite{am19}.  With the
partial differential point of view, comparison principles are used in
\cite{gimv} to locally gain some integrability for non-negative
sub-solutions.

\paragraph{Kinetic equations with integral diffusions.}

We would like to conclude this review of literature by mentioning the
weak Harnack inequality derived in \cite{is} for kinetic
equations. The proof also relies on De Giorgi type arguments that are
combined with a covering argument, referred to as an Ink-spots
theorem and inspired by the elliptic regularity for equations in
non-divergence form (see above). The interested reader is referred to the
introduction of \cite{is} for further details.

\subsection{Weak expansion of positivity}

The proof of the main result of this article relies on proving that
super-solutions of \eqref{e:main} expand positivity along times
(Lemma~\ref{l:pop}) and to combine it with the covering argument from
\cite{is} mentioned in the previous paragraph.  The derivation of the
weak Harnack inequality in the present article from the expansion of
positivity follows very closely the reasoning in \cite{is}.

In contrast with parabolic equations, it is not possible to apply the
Poincaré inequality in $v$ for $(t,x)$ fixed when studying solutions
of linear Fokker-Planck equations such as \eqref{e:main}. Instead, if
a sub-solution vanishes enough, then a quantity replacing the average in the usual Poincaré inequality is
decreased in the future. See $\theta_0 M$ in the weak Poincaré
inequality  in the next paragraph (Theorem~\ref{t:wpi}).

\begin{figure}
\centering{\includegraphics[height=3cm]{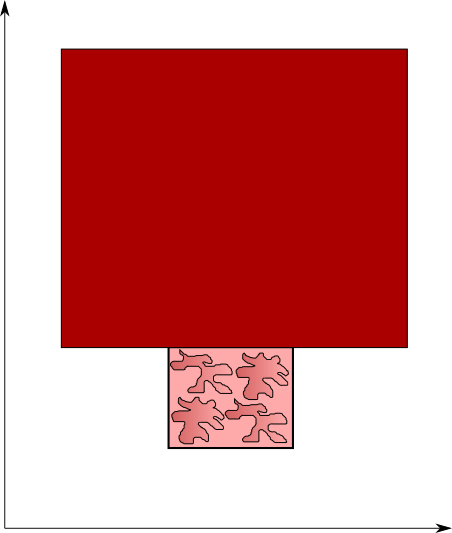}}
\put(-45,65){$Q_1$}
\put(-60,7){\small $\Qpos$}
\put(-10,-10){\scriptsize $(x,v)$}
\put(-80,80){\scriptsize $t$}
\caption{Expansion of positivity.}
\label{fig:intro}
\end{figure}
A way to circumvent this difficulty is to establish the expansion of
positivity of super-solutions in the spirit of \cite{zbMATH01698711}.
Given a small cylinder $\Qpos$ lying in the past of $Q_1$ (see
Figure~\ref{fig:intro}), Lemma~\ref{l:pop} states that if a
super-solution $f$ lies above $1$ in ``a good proportion'' of $\Qpos$,
then it lies above a constant $\ell_0>0$ in the whole cylinder
$Q_1$. Roughly speaking, such a lemma transforms an information in
measure about positivity in the past into a pointwise positivity in
the future in a (much) larger cylinder.

We emphasize the fact that in the classical parabolic case,
S.~N.~Kruzhkov does not need to prove such an expansion of positivity
thanks to an appropriate Poincar\'e inequality that can be applied
at any time $t>0$. Such an approach is inapplicable when there is
the additional  variable $x$.

Such a weak propagation of positivity was already proved in
\cite{gimv} thanks to a lemma of intermediate values, in the spirit of
De Giorgi's original proof. We recall that the proof of this key lemma
is not constructive in \cite{gimv}.  C.~Mouhot and the first author \cite{gm}
recently managed to make the proof of the intermediate value lemma
constructive.

\subsection{A weak Poincaré inequality}

The proof of the expansion of positivity relies on the following
weak Poincaré inequality.  The geometric setting is shown in
Figure~\ref{fig:wpi}. 
\begin{theorem}[Weak Poincar\'e inequality]
  \label{t:wpi}
  Let $\eta \in (0,1)$.  There exist $R > 1$ and $\theta_0 \in (0,1)$ depending on dimension and $\eta$ such that, if  $\Qext = (-1-\eta^2,0] \times B_{8R} \times B_{2R}$ and  $\Qzero = (-1-\eta^2,-1] \times B_{\eta^3} \times B_{\eta} $ (see  Figure~\ref{fig:wpi}), then for any non-negative function  $f \in L^2 (\Qext)$ such that $\nabla_v f \in L^2 (\Qext)$, $(\partial_t  +v\cdot \nabla_x) f \in L^2((-1-\eta^2,0] \times B_{8R},H^{-1}(B_{2R}))$, $f \le M $ in  $Q_1$ and  $|\{ f = 0 \} \cap \Qzero|  \ge \frac14 |\Qzero|, $ satisfying
\begin{align*}
(\partial_t + v \cdot \nabla_x) f \le H &\quad  \mbox{ in } \mathcal{D}' (\Qext) \quad \mbox{ with } H \in L^2_{t,x} H^{-1}_v(\Qext),
\end{align*} 
we have
  \[
    \| (f -\theta_0 M)_+ \|_{L^2 (Q_1)} \le  C (\|\nabla_v f \|_{L^2 (\Qext)} + \|H\|_{L^2_{t,x} H^{-1}_v(\Qext)})
  \]
  for some constant $C>0$ only depending on dimension.
  \end{theorem}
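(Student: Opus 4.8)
The plan is to reduce the statement to a Poincaré-type inequality with an $\eta$-dependent constant, and then absorb that dependence into the truncation level $\theta_0 M$ by exploiting that $f\le M$ on $Q_1$. I would first establish
\[
\|f\|_{L^2(Q_1)} \;\le\; C(\eta)\Bigl(\|\nabla_v f\|_{L^2(\Qext)} + \|H\|_{L^2_{t,x} H^{-1}_v(\Qext)}\Bigr)
\]
for a constant $C(\eta)$ depending only on the dimension and on $\eta$, and then conclude as follows. On $Q_1$ one has $0\le f\le M$, so on $\{f>\theta_0 M\}\cap Q_1$,
\[
(f-\theta_0 M)_+^2 \;\le\; (1-\theta_0)M\,(f-\theta_0 M)_+ \;\le\; (1-\theta_0)M\cdot\frac{f^2}{\theta_0 M} \;=\; \frac{1-\theta_0}{\theta_0}\,f^2,
\]
while $(f-\theta_0 M)_+=0$ elsewhere on $Q_1$; integrating and choosing $\theta_0=\theta_0(\eta)\in(0,1)$ with $\frac{1-\theta_0}{\theta_0}\le C(\eta)^{-2}$ gives the claimed inequality with a constant depending only on the dimension. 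Note this forces $\theta_0$ close to $1$ when $\eta$ is small, in agreement with the statement.

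The heart of the matter is thus the $L^2(Q_1)$ bound, which I would obtain from a ``broken characteristics'' representation. Given $z_1=(t_1,x_1,v_1)\in Q_1$ and $z_2=(t_2,x_2,v_2)\in\Qzero$ with $t_2<t_1$, I join them by: a velocity segment from $v_1$ to $w:=(x_1-x_2)/(t_1-t_2)$ at fixed $(t_1,x_1)$; the free-transport trajectory $t\mapsto(t,x_1+(t-t_1)w,w)$ on $[t_2,t_1]$, which runs from $(t_1,x_1,w)$ to $(t_2,x_2,w)$; and a velocity segment from $w$ to $v_2$ at fixed $(t_2,x_2)$. Along the velocity segments $f$ varies by at most $\int|\nabla_v f|$, and along the transport leg $\frac{d}{dt}f \le H$, which leads (formally) to
\[
f(z_1) \;\le\; f(z_2) + \int_{[v_1,w]}|\nabla_v f(t_1,x_1,\cdot)| + \int_{[w,v_2]}|\nabla_v f(t_2,x_2,\cdot)| + (\text{contribution of }H).
\]
The point of the enormous spatial and velocity balls in $\Qext$ is geometric: since $|x_1-x_2|\lesssim 1$ while $t_1-t_2$ may be as small as the distance from $t_1$ to $-1$, one has $|w|\lesssim (t_1-t_2)^{-1}$, and requiring the trajectory and the velocity segments to stay in $B_{8R}\times B_{2R}$ forces the choice $R=R(\eta)\gtrsim \eta^{-2}$.

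To exploit the estimate, for each $z_1\in Q_1$ I restrict $z_2$ to the set $G_{z_1}$ of points of $\{f=0\}\cap\Qzero$ that can be joined to $z_1$ by such a path inside $\Qext$. Since $|\{f=0\}\cap\Qzero|\ge\tfrac14|\Qzero|$ and, for $R=R(\eta)$ large, the joinable set exhausts all but an arbitrarily small fraction of $\Qzero$, one gets $|G_{z_1}|\ge\tfrac18|\Qzero|$ uniformly in $z_1$. Averaging the pointwise estimate over $z_2\in G_{z_1}$ annihilates $f(z_2)$; bounding each velocity-segment integral by Cauchy--Schwarz (the segments lie in $B_{2R}$, hence have length $\lesssim R$), then squaring and integrating in $z_1$ over $Q_1$ and using Fubini together with the lower bound on $|G_{z_1}|$, yields the $L^2(Q_1)$ estimate with $C(\eta)$ equal to a negative power of $\eta$ — which is all that is needed.

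The main obstacle, and the step requiring real care, is making the $H$-contribution rigorous: the inequality $\frac{d}{dt}f\le H$ along a single characteristic is meaningless for $H\in L^2_{t,x}H^{-1}_v$, so the whole construction must be run with trajectories smeared into tubes carrying a smooth velocity weight, the distributional inequality $(\partial_t+v\cdot\nabla_x)f\le H$ tested against the resulting non-negative bump, and the velocity-segment corrections matched to the same averaging — all while keeping the (now fattened) tubes inside $\Qext$ as the time-separation $t_1-t_2$ degenerates near the bottom of $Q_1$. A secondary, genuinely needed ingredient is the measure-theoretic bookkeeping guaranteeing $|G_{z_1}|\gtrsim|\Qzero|$, which is precisely what ties the size of $R$ to $\eta$; the concluding truncation step, by contrast, is elementary.
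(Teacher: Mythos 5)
Your route is genuinely different from the paper's. The paper localizes $f$ with a cut-off $\Psi$, runs an energy estimate for $\LK(f\Psi-h)$ to obtain $\|(f-h)_+\|_{L^2(Q_1)}\lesssim \|\nabla_v f\|+\|H\|$ (Lemma~\ref{l:poincare-local}), and then proves $h\le\theta_0 M$ on $Q_1$ by decomposing $h-\Psi=-P_R+E_R$ and invoking the maximum principle together with the strict positivity of the fundamental solution of the Kolmogorov equation (Lemma~\ref{l:max}). You instead aim at the stronger, $\eta$-dependent bound $\|f\|_{L^2(Q_1)}\le C(\eta)\bigl(\|\nabla_v f\|_{L^2(\Qext)}+\|H\|_{L^2_{t,x}H^{-1}_v(\Qext)}\bigr)$ via broken characteristics, and then truncate. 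The truncation step is correct: on $\{f>\theta_0M\}\cap Q_1$ one indeed has $(f-\theta_0M)_+^2\le\frac{1-\theta_0}{\theta_0}f^2$, so choosing $1-\theta_0\le\theta_0\,C(\eta)^{-2}$ converts your inequality into the stated one with a dimensional constant; this is a legitimate alternative to the paper's splitting into $(f-h)_+$ and the bound on $h$.

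The core trajectory argument, however, has a genuine gap. For $z_1=(t_1,x_1,v_1)$ near the bottom of $Q_1$, i.e.\ $t_1\to-1^+$, and $z_2\in\Qzero$, one only has $t_1-t_2\ge t_1+1$, which is \emph{not} bounded below by any function of $\eta$. Hence the intermediate velocity $w=(x_1-x_2)/(t_1-t_2)$ is unbounded over the admissible pairs, no choice of $R=R(\eta)$ keeps the velocity segments inside $B_{2R}$, and the Cauchy--Schwarz factor $|w-v_1|^{1/2}$ in the segment estimate blows up; your claim that $R\gtrsim\eta^{-2}$ suffices implicitly assumes $t_1-t_2\gtrsim\eta^2$, which fails exactly at the points of $Q_1$ where the conclusion must hold. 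The repair is the time-lap device the paper uses in the proof of Lemma~\ref{l:max}: discard the top slice $\Qzero\cap\{t>-1-\eta^2/8\}$, of measure $\frac18|\Qzero|$, so that $\{f=0\}\cap\Qzero\cap\{t\le-1-\eta^2/8\}$ still has measure $\ge\frac18|\Qzero|$ while every admissible $z_2$ now satisfies $t_1-t_2\ge\eta^2/8$ uniformly in $z_1\in Q_1$. Without this restriction the averaging step does not close. Separately, the treatment of $H\in L^2_{t,x}H^{-1}_v$ along characteristics, which you rightly single out as the hard analytical point, is only described and not carried out; as written the pointwise inequality $f(z_1)\le f(z_2)+\dots$ is formal. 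So the proposal is a plausible outline of an alternative, trajectory-based proof (in the spirit of the constructive intermediate value lemma of Guerand and Mouhot) rather than a complete argument.
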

  \begin{remark}
    In the previous statement, $L^2_{t,x} H^{-1}_v(\Qext)$ is a short
    hand notation for    $L^2((-1-\eta^2,0] \times B_{8R}, H^{-1}(B_{2R}))$.
  \end{remark}
  \begin{figure}[h]
\centering{\includegraphics[height=3.5cm]{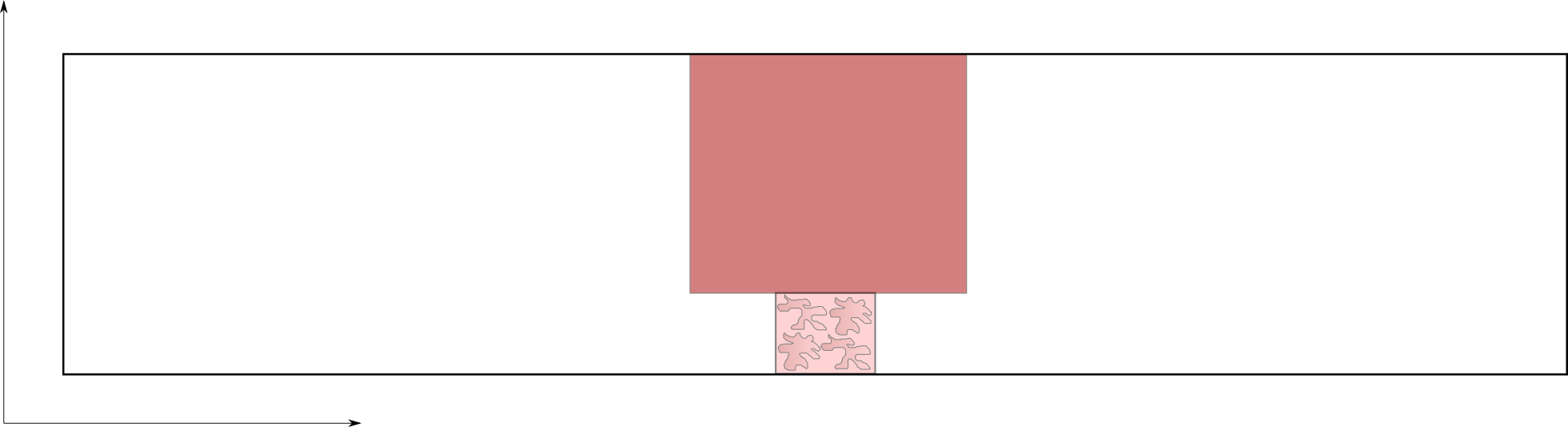}}
\put(-380,90){$t$}
\put(-348,74){$\Qext$}
\put(-185,52){\huge $Q_1$}
\put(-184,18){{\scriptsize $\Qzero$}}
\put(-275,-4){$(x,v)$}
\caption{Geometric setting of the weak Poincaré inequality.  If the
  set where the function $f$ vanishes in $\Qzero$ has a measure at
  least equal to $\frac14 |\Qzero|$, then
  $h \le \theta_0 \sup_{Q_1} f$ in the red cylinder $Q_1$. }
  \label{fig:wpi}
\end{figure}
A somewhat similar inequality was introduced by W.~Wang and L.~Zhang
for sub-solutions of ultraparabolic equations, see for instance
\cite[Lemmas~3.3 \& 3.4]{wz09} and the corresponding lemmas in
\cite{wz11,wz17}. Even if statements and proofs look different, they
share many similarities.  The main difference between statements
  comes from the fact that we adopt the functional framework from
  \cite{am19} and forget about the equation under study.  The main
difference in proofs lies on the fact that we avoid using repeatedly
the exact form of the fundamental solution of the Kolmogorov equation
and we seek for arguments closer to the classical theory of parabolic
equations presented for instance in \cite{L1} or \cite{MR1465184}.  In
contrast with \cite{wz09,wz11,wz17}, the information obtained through
the log-transform is summarized in only one weak Poincar\'e inequality
(while it is split it several lemmas in \cite{wz09,wz11,wz17}) and the
geometric settings of the main lemmas are as simple as possible. For
instance, it is the same for the weak Harnack inequality and for the
lemma of expansion of positivity (Lemma~\ref{l:pop}). We also mostly
use cylinders respecting the invariances of the equation (see the
defintion of $Q_r(z)$ in the paragraph devoted to notation), except
the ``large'' cylinders where the equation is satisfied such as
$\Qext$ in Theorem~\ref{t:wpi}.

\paragraph{The Lie group structure.}

Eq.~\eqref{e:main} is not translation invariant in the velocity
variable because of the free transport term. But this (class of)
equation(s) comes from mathematical physics and it enjoys the Galilean
invariance: in a frame moving with constant speed $v_0$, the equation
is the same. For $z_1 =(t_1,x_1,v_1)$ and $z_2 = (t_2,x_2,v_2)$, we
define the following non-commutative group product
\[ z_1 \circ z_2 = (t_1+t_2,x_1+x_2 + t_2 v_1, v_1 + v_2). \]
In particular, for $z = (t,x,v)$, the inverse element is
$z^{-1} = (-t,-x+tv,-v)$.

\paragraph{Scaling and cylinders.} Given a parameter $r>0$, the class
of equations~\eqref{e:main} is invariant under the scaling
\[
  f_r (t,x,v) = f(r^2 t, r^3x, r v ) .
\]
It is convenient to write $S_r(z) = (r^2t,r^3x,rv)$ if $z = (t,x,v)$.
It is thus natural to consider the following cylinders ``centered'' at
$(0,0,0)$ of radius $r>0$:
$Q_r = (-r^2,0] \times B_{r^3} \times B_r = S_r (Q_1)$. Moreover, in
view of the Galilean invariance, it is then natural to consider
cylinders centered at $z_0 \in \R^{1+2d}$ of radius $r>0$ of the form:
$Q_r (z_0) := z_0 \circ Q_r $ which is 

\begin{align*}
  Q_r(z_0)
  &:= \left\{ z \in \R^{1+2d} \ : \ z_0^{-1} \circ z  \in
    Q_{r}(0) \right\} \\
  &:= \left\{ -r^2 <t-t_0 \le 0, \ |x - x_0- (t-t_0)v_0|
    < r^3, \ |v-v_0| < r \right\}.
\end{align*}

\paragraph{Organization of the article.} In Section~\ref{s:weak}, the
definition of weak sub-solutions, super-solutions and solutions for
\eqref{e:main} is recalled and two properties of the log-transform are
given. Section~\ref{s:wpi} is devoted to the proof of the weak
Poincaré inequality. In Section~\ref{s:dg}, we explain how to derive
the lemma of expansion of positivity from the weak Poincaré inequality
and how to prove the weak Harnack inequality from expansion of
positivity by using a covering lemma called the Ink-spots
theorem. This last result is recalled in Appendix~\ref{a:ink}. In
another appendix, see \S\ref{a:holder}, we recall how H\"older
regularity can be derived directly from the expansion of positivity of
super-solutions. The proof of a technical lemma about stacked cylinders
is given in Appendix~\ref{a:cylinder}. 

\paragraph{Notation.} The open ball of the Euclidian space centered at
$c$ of radius $R$ is denoted by $B_R (c)$. The measure of a Lebesgue
set $A$ of the Euclidian space is denoted by $|A|$. The $z$ variable
refers to $(t,x,v) \in \R \times \R^d \times \R^d =\R^{1+2d}$.  For
$z_1,z_2 \in \R^{1+2d}$, $z_1 \circ z_2$ denotes their Lie group
product and $z_1^{-1}$ denotes the inverse of $z_1$ with respect to
$\circ$. For $r>0$, $S_r$ denotes the scaling operator. A constant is
said to be \emph{universal} if it only depends on dimension and the
ellipticity constants $\lambda,\Lambda$ appearing in
\eqref{e:ellipticity}. The notation $a \lesssim b$ means that
$a \le Cb$ for some universal constant $C>0$.

For an open set $\Omega$, $\D (\Omega)$ denotes the set of $C^\infty$
functions compactly supported in $\Omega$ while $\D' (\Omega)$ denotes
the set of distributions in $\Omega$.

\paragraph{Acknowledgements.} The authors are indebted to
C.~Mouhot for the fruitful discussions they had together during the
preparation of this paper.  The first author acknowledges
funding by the ERC grant MAFRAN 2017-2022.

\section{Weak solutions and Log-transform}
\label{s:weak}

\subsection{Weak solutions}

We start with the definition of weak (super- and sub-) solutions of \eqref{e:main}.
\begin{definition}[Weak solutions] \label{d:weak} Let $\Omega = I \times B^x \times B^v$ be open. A function
  $f\colon \Omega \to \R$ is a \emph{weak super-solution} (resp. \emph{weak sub-solution}) of \eqref{e:main}
  in  $\Omega$ if $f \in L^{\infty} (I, L^2 (B^x \times B^v)) \cap L^2 (I \times B^x, H^1(B^v))$ and
  $(\partial_t + v \cdot \nabla_x) f \in L^2(I \times B^x, H^{-1}(B^v))$ and
  \[ 
    - \int  f (\partial_t + v \cdot \nabla_x) \varphi \dz
    + \int  A \nabla_v f \cdot  \nabla_v \varphi  \dz - \int (B \cdot \nabla_v f+S) \varphi  \dz \ge 0
    \quad \text{(resp. $\le 0$)}
  \]
  for all non-negative $\varphi \in \D(\Omega)$. It is a \emph{weak solution} of \eqref{e:main} in
  $Q$ if it is both a weak super-solution and a  weak sub-solution.
\end{definition}

\bigskip

As explained in the introduction, the local boundedness of
sub-solutions has been known since \cite{pp}. We give below the
version contained in \cite{gimv}.
\begin{proposition}[Local upper bound for sub-solutions --
  \cite{gimv}] \label{p:l2linfty} Consider two cylinders
  $\Qint = (t_1,T] \times B_{r_x} \times B_{r_v}$ and
  $\Qext = (t_0,T] \times B_{R_x} \times B_{R_v}$ with $t_1 > t_0$,
  $r_x <R_x$ and $r_v < R_v$.  Assume that $f$ is a sub-solution of
  \eqref{e:main} in some cylindrical open set $\Omega \supset \Qext$. Then
\[ \esssup_{\Qint} f  \le C ( \| f_+ \|_{L^2(\Qext)} + \|S\|_{L^\infty (\Qext)}) \]
where $C$ only depends on $d,\lambda,\Lambda$ and $(t_1 -t_0,R_x-r_x,R_v-r_v)$. 
\end{proposition}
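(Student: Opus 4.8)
The plan is to run a De Giorgi iteration on a decreasing sequence of nested cylinders interpolating between $\Qext$ and $\Qint$. First I would reduce to the case $S \equiv 0$ by noting that, since $\|S\|_{L^\infty(\Qext)}$ is finite, the function $g = f - (t - t_0)\|S\|_{L^\infty(\Qext)}$ (or a similar affine-in-$t$ correction) is a sub-solution of the homogeneous equation up to harmless lower-order terms; alternatively one absorbs $S$ directly into the energy estimate as a bounded forcing. Then I would set up truncations $f_k = (f - \ell_k)_+$ with levels $\ell_k = \ell(2 - 2^{-k})$ increasing to $2\ell$, where $\ell > 0$ is a free parameter to be fixed at the end, and cylinders $Q^k$ shrinking from $\Qext$ down to $\Qint$, with cutoff functions $\chi_k$ that are $1$ on $Q^{k+1}$, supported in $Q^k$, with $|\partial_t \chi_k| + |v\cdot\nabla_x \chi_k| \lesssim 4^k$ and $|\nabla_v \chi_k| \lesssim 2^k$ (the mismatch in scaling powers between $t$, $x$, $v$ is what forces the $4^k$ on the transport part).

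The core of the argument is the \emph{energy estimate}: testing the sub-solution inequality against $\varphi = \chi_k^2 f_k$ and using ellipticity \eqref{e:ellipticity} and the bound on $B$, one obtains, after absorbing $\nabla_v f_k$ terms via Young's inequality,
\[
  \sup_{t} \int \chi_k^2 f_k^2 \,\mathrm{d}x\,\mathrm{d}v + \int \chi_k^2 |\nabla_v f_k|^2 \dz
  \lesssim (4^k + 1) \int_{Q^k} f_k^2 \dz .
\]
Here one must be slightly careful with the transport term: since $(\partial_t + v\cdot\nabla_x) f \in L^2_{t,x}H^{-1}_v$ only, the manipulation $\int (\partial_t + v\cdot\nabla_x) f \cdot \chi_k^2 f_k = \frac12 \int (\partial_t + v\cdot\nabla_x)(\chi_k^2 f_k^2) - (\text{derivatives of }\chi_k)$ has to be justified by the renormalization/chain-rule properties of the weak formulation (this is standard in the kinetic setting, cf.\ the references in the paper). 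The next ingredient is a \emph{gain of integrability}: the natural ``energy space'' bound $\sup_t \|u\|_{L^2_{x,v}} + \|\nabla_v u\|_{L^2}$ for $u$ supported in a fixed cylinder controls $\|u\|_{L^{2+\delta}}$ for some $\delta>0$ — this is where one uses a kinetic Sobolev/Nash-type inequality combining the velocity-gradient regularization with the averaging in $x$ coming from the transport operator (equivalently, one can cite the embedding used in \cite{gimv,wz09}). Combining the energy estimate with this gain, and using that $\{\chi_k > 0\}\cap\{f > \ell_{k+1}\} \subset \{f_k > 2^{-k-1}\ell\}$ together with Chebyshev, yields the iterative inequality
\[
  Y_{k+1} \lesssim C^k \ell^{-2\delta'} Y_k^{1+\delta'}, \qquad Y_k := \int_{Q^k} f_k^2 \dz,
\]
for some $\delta' > 0$ depending only on $d$.

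Finally I would invoke the standard fast-geometric-convergence lemma: if $Y_0$ is small enough — which is arranged by choosing $\ell \simeq \|f_+\|_{L^2(\Qext)}$ with a large universal constant, so that $Y_0 \le \varepsilon_0$ for the threshold $\varepsilon_0$ produced by the iteration lemma — then $Y_k \to 0$, hence $f \le 2\ell$ a.e.\ on $\Qint$, which is the claimed bound (after restoring $S$). The main obstacle I anticipate is not the iteration itself, which is routine, but the two kinetic-specific technical points: (i) justifying the integration by parts in $t$ and $x$ against $\chi_k^2 f_k$ when only $(\partial_t + v\cdot\nabla_x)f \in L^2_{t,x}H^{-1}_v$ is known — this requires either a density/approximation argument or citing a renormalization lemma — and (ii) getting the gain-of-integrability inequality with constants that are uniform as the cylinders shrink, since the Sobolev-type embedding in this hypoelliptic scaling is more delicate than its parabolic analogue. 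Since the proposition is quoted from \cite{gimv}, in the paper itself this would simply be stated with a reference rather than reproved.
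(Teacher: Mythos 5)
The paper does not actually prove this proposition: it is quoted verbatim from \cite{gimv} (the local boundedness going back to \cite{pp}), so there is no in-paper argument to compare against. Your outline is the standard De Giorgi first-lemma route and matches in architecture what is done in those references: reduction to $S=0$, truncations $(f-\ell_k)_+$ on shrinking cylinders, a Caccioppoli-type energy estimate from the test function $\chi_k^2 f_k$, a gain of integrability, Chebyshev, the fast geometric convergence lemma, and the choice $\ell\simeq \|f_+\|_{L^2(\Qext)}$ to trigger it, which indeed yields the stated linear bound.

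The one step where your plan under-describes the difficulty --- and where the kinetic case genuinely departs from its parabolic analogue --- is the gain of integrability. An averaging-lemma or kinetic Sobolev/Nash embedding of the type you invoke requires a \emph{two-sided} bound on $(\partial_t+v\cdot\nabla_x)u$ in $L^2_{t,x}H^{-1}_v$. For $u=\chi_k^2(f-\ell_k)_+$ with $f$ merely a sub-solution, one only has the one-sided information $\LK u\le H-\mu$ with $\mu$ a non-negative Radon measure of a priori unknown mass; the measure term is not in $L^2_{t,x}H^{-1}_v$ and the embedding does not apply as stated. This is precisely the point the present paper flags in its paragraph ``A functional analysis point of view'': \cite{gimv} circumvents it by a comparison-principle argument against the fundamental solution of the Kolmogorov operator to gain local integrability for non-negative sub-solutions, rather than by a direct functional embedding applied to the truncations. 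So your plan is correct in outline and correctly sourced, but to execute it you would have to replace the ``cite a kinetic Sobolev inequality'' step by that comparison argument; the obstacles you do list (justifying the integration by parts against $\chi_k^2 f_k$, uniformity of constants along the iteration) are real but secondary by comparison.
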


\subsection{Log-transform of sub-solutions}

For technicals reasons, the positive part of the opposite of the
logarithm is replaced with a more regular function $G$ that keeps the
important features of $\max(0,-\ln)$.  The function $\max(0,-\ln)$ was
first considered in \cite{kru63,kru64}.
\begin{lemma}[A convex function] \label{l:G} There exists
  $G: (0,+\infty) \to [0,+\infty)$ non-increasing and $C^2$ such that
\begin{itemize}
\item $G'' \ge (G')^2 \text{ and } G' \le 0 \text{ in } (0,+\infty)$,
\item $G$ is supported in $(0,1]$,
\item $G (t) \sim - \ln t$ as $t \to 0^+$, 
\item $-G'(t) \le \frac1t$  for $t \in (0,\frac14]$.
\end{itemize}
\end{lemma}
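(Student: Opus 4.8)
The plan is to build $G$ by prescribing $-G' \ge 0$, starting from the observation that the \emph{borderline} equation $G'' = (G')^2$ is solved exactly by $G(t) = -\ln(t+c) + d$; this already accounts for the logarithmic behaviour near $0$ and for the bound $-G'(t) \le 1/t$. The convenient reformulation is that, as long as $G' < 0$, the inequality $G'' \ge (G')^2$ is equivalent to $\psi' \ge 1$ for $\psi := -1/G'$. So I would first choose a suitable $\psi$ and then recover $G$ from $G' = -1/\psi$.

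Concretely, fix a $C^\infty$ function $\psi \colon (0,1) \to (0,+\infty)$ with $\psi(t) = t$ on $(0,\tfrac14]$, with $\psi' \ge 1$ on all of $(0,1)$, and with $\psi(t) \to +\infty$ fast enough as $t \to 1^-$ that the function $\phi := 1/\psi$, extended by $0$ on $[1,+\infty)$, belongs to $C^2((0,+\infty))$ — for instance, by letting $\psi$ be a primitive of $(1-t)^{-2} e^{1/(1-t)}$ near $t = 1$, so that $\phi$ and all its derivatives vanish at $t=1$. Such a $\psi$ exists: take $\psi(t) := t + \int_{1/4}^{t} (\beta(s) - 1) \ds$ for a smooth $\beta \ge 1$ equal to $1$ on $(0,\tfrac14]$ with the prescribed growth near $1$, so that $\psi' = \beta \ge 1$. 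Then set
\[
  G(t) := \int_t^1 \phi(s) \ds, \qquad t \in (0,+\infty),
\]
so that $G \equiv 0$ on $[1,+\infty)$ and $G' = -\phi$ on $(0,+\infty)$.

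All four required properties are then immediate. The function $G$ is $C^2$ because $\phi$ is; it is non-negative and supported in $(0,1]$ since $\phi \ge 0$ and $\phi \equiv 0$ on $[1,+\infty)$ (indeed $G > 0$ on $(0,1)$ and $G(1) = 0$); and it is non-increasing with $G' = -\phi \le 0$. The convexity-type estimate holds since on $(0,1)$ one has $G'' = -\phi' = \psi'/\psi^2 \ge 1/\psi^2 = \phi^2 = (G')^2$, while on $[1,+\infty)$ both sides vanish. Finally, on $(0,\tfrac14]$ we have $\psi(t) = t$, hence $-G'(t) = \phi(t) = 1/t$ (so the last bullet holds, with equality), and $G(t) = \int_t^{1/4} \tfrac{1}{s} \ds + \int_{1/4}^1 \phi(s) \ds = -\ln t + c$ for a finite constant $c$, whence $G(t) \sim -\ln t$ as $t \to 0^+$.

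The only genuinely delicate point — the crux of an otherwise soft argument — is the behaviour of $\psi$ near $t = 1$: it must blow up fast enough (super-polynomial growth suffices) that $\phi = 1/\psi$ patches $C^2$-smoothly onto the identically-zero extension, while $\psi'$ must remain $\ge 1$ all the way through the transition off the region where $\psi(t) = t$. Both requirements are met simultaneously by the explicit exponential choice above, and everything else is routine verification.
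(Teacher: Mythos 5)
Your construction is correct. Note that the paper itself gives no proof of this lemma (it is stated as an elementary regularization of $\max(0,-\ln)$, with $G(t)=-\ln t$ being the borderline case $G''=(G')^2$), so there is no argument to compare against; your reduction of $G''\ge (G')^2$ to $\psi'\ge 1$ for $\psi=-1/G'$, with $\psi(t)=t$ near $0$ (giving exactly $-G'=1/t$ and $G(t)=-\ln t+c$ there) and $\psi$ blowing up like $e^{1/(1-t)}$ at $t=1^-$ so that $\phi=1/\psi$ glues smoothly to $0$, is a complete and clean verification of all four bullet points.
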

\begin{lemma}[Log-transform of solutions]\label{l:logtrans}
  Let $\eps \in (0,\frac14]$ and $f$ be a non-negative weak
  super-solution of \eqref{e:main} in a cylinder
  $\Qext = (t_0,T] \times B_{R_x} \times B_{R_v}$.  Then
  $g = G(\eps+f)$ satisfies
\[ 
  (\partial_t + v \cdot \nabla_x ) g + \lambda |\nabla_v g|^2 \le
  \nabla_v \cdot (A \nabla_v g) + B \cdot \nabla_v g + \eps^{-1} |S|  \quad \text{ in } \Qext,
\]
\textit{i.e.} it is a sub-solution of the corresponding equation in $\Qext$. 
\end{lemma}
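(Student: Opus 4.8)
The plan is to differentiate $g = G(\eps + f)$ formally using the chain rule and then verify that the inequality satisfied by $f$ as a super-solution, together with the convexity property $G'' \ge (G')^2$ from Lemma~\ref{l:G}, produces the stated sub-solution inequality for $g$; all of this must be carried out in the weak (distributional) sense. First I would record the pointwise identities: writing $h = \eps + f > 0$, we have $\nabla_v g = G'(h)\nabla_v f$ and $(\partial_t + v\cdot\nabla_x) g = G'(h)(\partial_t + v\cdot\nabla_x) f$, while
\[
  \nabla_v\cdot(A\nabla_v g) = G'(h)\,\nabla_v\cdot(A\nabla_v f) + G''(h)\,A\nabla_v f\cdot\nabla_v f.
\]
Since $G' \le 0$, multiplying the super-solution inequality for $f$ by the nonpositive factor $G'(h)$ reverses it, giving an inequality of the right (sub-solution) type for the terms $G'(h)(\partial_t + v\cdot\nabla_x)f$, $G'(h)\nabla_v\cdot(A\nabla_v f)$, $G'(h)B\cdot\nabla_v f$ and $G'(h)S$. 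It remains to handle the extra term $G''(h)\,A\nabla_v f\cdot\nabla_v f$ that appears in $\nabla_v\cdot(A\nabla_v g)$ and to produce the good term $\lambda|\nabla_v g|^2$ on the left: using ellipticity $A\xi\cdot\xi \ge \lambda|\xi|^2$ and $G'' \ge (G')^2 \ge 0$, we get $G''(h)A\nabla_v f\cdot\nabla_v f \ge \lambda (G'(h))^2|\nabla_v f|^2 = \lambda|\nabla_v g|^2$; moving this to the left-hand side yields exactly the dominating quadratic term. Finally, for the source term, on $\{f \ge \eps\}$... more simply, since $-G'(h) \le 1/h \le 1/\eps$ whenever $h \le 1/4$ (and $G' = 0$ when $h > 1$), we get $|G'(h)S| \le \eps^{-1}|S|$, which gives the $\eps^{-1}|S|$ term on the right.

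The main obstacle is that this computation is only formal as written: $f$ need not be smooth, $G(\eps+f)$ a priori only inherits the integrability and the $L^2_{t,x}H^1_v$-regularity of $f$ (here the cutoff $\eps$ and the boundedness $|G'| \le 1/\eps$, $|G''| \lesssim 1/\eps^2$ on $[\eps, 1]$ are crucial to make $\nabla_v g = G'(\eps+f)\nabla_v f$ a genuine $L^2$ function and to control the transport term), and one must check that $g$ actually lies in the correct function space and satisfies the inequality when tested against arbitrary nonnegative $\varphi \in \D(\Qext)$. The clean way to justify this is a regularization argument: approximate $f$ by smooth functions $f_\delta$ (e.g. by convolution in all variables, noting that the class of equations is invariant under such mollification up to commutator errors of the Bouchut/DiPerna–Lions type, or alternatively work directly with the weak formulation and an $H^1$ chain-rule lemma for Lipschitz-on-compacts composition), carry out the above chain-rule computation for $f_\delta$ where everything is classical, and pass to the limit $\delta \to 0$ in each term of the weak formulation — the lower-order and source terms converge by dominated convergence, the transport term by the $L^2_{t,x}H^{-1}_v$ bound on $(\partial_t + v\cdot\nabla_x)f$, the diffusion terms by weak $L^2$ convergence of $\nabla_v f_\delta$, and the quadratic term $\lambda|\nabla_v g_\delta|^2$ survives the limit because it enters with a favorable sign (lower semicontinuity of the $L^2$ norm under weak convergence suffices, since it sits on the side of the inequality that is being bounded from below).

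I expect the bookkeeping of the weak formulation — in particular keeping track of which terms are integrated by parts, ensuring $G(\eps+f) \in L^\infty_t L^2_{x,v} \cap L^2_{t,x}H^1_v$ with $(\partial_t + v\cdot\nabla_x)G(\eps+f) \in L^2_{t,x}H^{-1}_v$ so that the statement "$g$ is a sub-solution of the corresponding equation" is even meaningful, and handling the test function carefully so that the sign of $G'$ can be exploited — to be the part requiring the most care, whereas the algebraic heart (the inequality $G'' \ge (G')^2$ combined with ellipticity) is a one-line computation. One should also note that the properties $G$ is supported in $(0,1]$ and $-G'(t) \le 1/t$ on $(0,1/4]$ from Lemma~\ref{l:G} are exactly what is needed so that the right-hand side source term is $\eps^{-1}|S|$ uniformly, without any dependence on $f$ itself.
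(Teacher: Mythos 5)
Your proposal is correct and follows essentially the same route as the paper, whose entire proof consists of noting the functional-space memberships ($0\le g\le G(\eps)$, $\nabla_v g=G'(\eps+f)\nabla_v f$) and plugging the test function $G'(\eps+f)\Psi$ into the weak formulation for $f$ --- the rigorous counterpart of your ``multiply by $G'(\eps+f)$ and use $G''\ge (G')^2$ plus ellipticity'' computation. One tiny patch: your justification of $|G'(\eps+f)|\le \eps^{-1}$ covers $\eps+f\le 1/4$ and $\eps+f>1$ but not $(1/4,1]$; the clean argument (and the one the paper uses) is that $G'$ is non-decreasing since $G''\ge 0$, hence $|G'(\eps+f)|\le |G'(\eps)|\le \eps^{-1}$ because $f\ge 0$ and $\eps\le 1/4$.
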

\begin{proof}
  We first note that $g \in L^\infty (\Qext)$ since
  $0 \le g \le G(\eps)$.  Moreover,
  $\nabla_v g = G'(\eps + f) \nabla_v f$ with
  $|G'(\eps+f)| \le |G'(\eps)|$. In particular,
  $g \in L^2 ((t_0,T] \times B_{R_x}, H^1 (B_{R_v}))$.  In order to
  obtain the sub-equation, it is sufficient to consider the
  test-function $G'(\eps+f) \Psi$ in the definition of super-solution
  for $f$.
\end{proof}
The following observation is key in Moser's reasoning since the \emph{square}
of the $L^2$-norm of $\nabla_v g$ is controlled by the mass of $g$.
\begin{lemma}[Time evolution of the mass of $g$]\label{l:ee-g}
  Let $f$ be a non-negative weak sub-solution of \eqref{e:main} in a
  cylinder $\Qext = (t_0,T] \times B_{r_x} \times B_{r_v}$ and
  $\Qint = (t_1,T] \times B_{R_x} \times B_{R_v}$ with $t_1 > t_0$,
  $r_x < R_x$ and $r_v < R_v$. Then
\[
\frac{\lambda}2 \int_{\Qint} |\nabla_v g |^2 
\le C \left( \int_{\Qext} g(\tau)  + 1  + \eps^{-1} \|S\|_{L^\infty(\Qext)} \right)
\]
where $C$  depends on dimension, $\lambda$, $\Lambda$, $\Qint$ and $\Qext$.
\end{lemma}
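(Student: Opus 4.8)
The plan is to test the sub-solution inequality for $g = G(\eps+f)$ (given by Lemma~\ref{l:logtrans}) against a suitable non-negative cutoff $\varphi = \psi^2$ where $\psi \in \D$ localizes in $x,v$ and equals $1$ on the smaller spatial and velocity balls, together with a time cutoff, and then exploit the fact that in Lemma~\ref{l:logtrans} the term $\lambda|\nabla_v g|^2$ sits on the \emph{good} side of the inequality. Concretely, one rewrites the sub-solution statement in weak form: for non-negative $\varphi \in \D$,
\[
-\int g\,(\partial_t + v\cdot\nabla_x)\varphi \,\dz + \lambda\int |\nabla_v g|^2 \varphi \,\dz \le -\int A\nabla_v g\cdot\nabla_v\varphi\,\dz + \int (B\cdot\nabla_v g)\varphi\,\dz + \eps^{-1}\int |S|\varphi\,\dz.
\]
The goal is to absorb the first and the right-hand-side gradient terms to leave $\frac{\lambda}{2}\int_{\Qint}|\nabla_v g|^2$ on the left.

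First I would choose $\varphi(t,x,v) = \chi(t)\psi^2(x,v)$ (or, more carefully, a cutoff adapted to the cylindrical geometry) with $\chi$ a Lipschitz time cutoff that is $0$ near $t_0$ and $1$ on $(t_1,T]$, and $\psi$ a smooth spatial/velocity cutoff supported in $B_{r_x}\times B_{r_v}$ and equal to $1$ on $B_{R_x}\times B_{R_v}$. The transport term $-\int g\,(\partial_t+v\cdot\nabla_x)\varphi$ produces $-\int g\,\chi'(t)\psi^2$ plus $-\int g\,\chi\, v\cdot\nabla_x(\psi^2)$; since $g \ge 0$ and $g \in L^\infty(\Qext)$ both are bounded by $C\int_{\Qext} g$ using that $|v|$ is bounded on $\Qext$ and that $\|\chi'\|_\infty, \|\nabla_x\psi^2\|_\infty$ depend only on the gap between the cylinders. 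The ellipticity term $-\int A\nabla_v g\cdot\nabla_v(\psi^2) = -2\int A\nabla_v g\cdot(\psi\nabla_v\psi)$ is handled by Cauchy--Schwarz and Young's inequality: $\bigl|\int A\nabla_v g\cdot\nabla_v\varphi\bigr| \le \Lambda\cdot 2\int |\nabla_v g|\,\psi\,|\nabla_v\psi| \le \frac{\lambda}{4}\int|\nabla_v g|^2\psi^2 + C(\Lambda,\lambda)\int_{\supp\psi}|\nabla_v\psi|^2$. The drift term $\int (B\cdot\nabla_v g)\psi^2$ is likewise split: $|B|\le\Lambda$, so $\bigl|\int B\cdot\nabla_v g\,\psi^2\bigr| \le \Lambda\int|\nabla_v g|\psi^2 \le \frac{\lambda}{4}\int|\nabla_v g|^2\psi^2 + C(\Lambda,\lambda)|\supp\psi|$. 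Finally $\eps^{-1}\int|S|\psi^2 \le \eps^{-1}\|S\|_{L^\infty(\Qext)}|\Qext|$.

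Collecting terms, the two $\frac{\lambda}{4}\int|\nabla_v g|^2\psi^2$ contributions are absorbed into $\lambda\int|\nabla_v g|^2\varphi$ on the left, leaving $\frac{\lambda}{2}\int|\nabla_v g|^2\varphi \ge \frac{\lambda}{2}\int_{\Qint}|\nabla_v g|^2$ (since $\varphi = 1$ on $\Qint$ up to the time cutoff, which one arranges to be $1$ there), and the right-hand side is bounded by $C(\int_{\Qext}g + 1 + \eps^{-1}\|S\|_{L^\infty(\Qext)})$ with $C$ depending on $d,\lambda,\Lambda$ and the cylinder gaps, as claimed. One subtlety is that $\varphi$ must be an admissible (smooth, compactly supported, non-negative) test function while the natural cutoff is only Lipschitz and does not vanish at the final time $T$; this is standard and is resolved by a density/approximation argument (mollify the cutoffs, and note $(t_1,T]$ is a half-open interval so no cutoff is needed at $t=T$ — or use the $L^\infty_t L^2$ regularity of $g$ to justify evaluating at $T$), and I would relegate it to a remark. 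The main obstacle — genuinely mild here — is precisely this justification that one may use a test function not compactly supported in time; everything else is the routine energy-estimate bookkeeping, and the whole point of the lemma is that Lemma~\ref{l:logtrans} has already done the real work by placing $\lambda|\nabla_v g|^2$ on the favorable side.
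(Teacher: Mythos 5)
Your proof is essentially the paper's: the paper tests the sub-equation for $g$ from Lemma~\ref{l:logtrans} with $\Psi^2$, where $\Psi$ is a smooth space--time cutoff supported in $\Qext$ and equal to $1$ on $\Qint$, absorbs the two gradient contributions by Young's inequality with weight $\frac{\lambda}{4}$ each, and bounds the transport term by $C(1+\int_{\Qext} g)$, exactly as you do. The only cosmetic differences are that you factor the cutoff as $\chi(t)\psi^2(x,v)$ and make explicit the (standard) admissibility of a test function that does not vanish at the final time, which the paper leaves implicit.
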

\begin{proof}
  Consider a smooth cut-off function $\Psi$ valued in $[0,1]$,
  supported in $\Qext$ and equal to $1$ in $\Qint$ and use $\Psi^2$ as
  a test-function for the sub-equation satisfied by $g$ and get
  \begin{align*}
   \lambda \int |\nabla_v g|^2 \Psi^2   
     \le &- 2 \int A \nabla_v g \cdot \Psi \nabla_v \Psi + \int g (\partial_t + v \cdot \nabla_x) \Psi^2  + \int (B \cdot \nabla_v g + \eps^{-1} |S| ) \Psi^2 \\
    \le & \frac\lambda4 \int |\nabla_v g |^2 \Psi^2 + C (1+\int_{\Qext} g) + \frac\lambda4 \int |\nabla_v g |^2 \Psi^2 + C \eps^{-1} \|S\|_{L^\infty(\Qext)}.
  \end{align*}
  This yields the desired estimate. 
\end{proof}

\section{A weak Poincar\'e inequality}
\label{s:wpi}

In order to prove Theorem~\ref{t:wpi}, we first derive a local
Poincaré inequality (Lemma~\ref{l:poincare-local}) with an error
function $h$ due to the localization. This function $h$ satisfies
\begin{equation}
  \label{e:cauchy}
  \begin{cases}
    \LK h = f \LK \Psi, & \text{ in } (a,b) \times \R^{2d},\\
    h = 0, & \text{ in } \{a \} \times \R^{2d},
  \end{cases}
\end{equation}
where $\LK = (\partial_t + v\cdot \nabla_x) - \Delta_v$ is the
Kolmogorov operator and $\Psi$ is a cut-off function equal to $1$ in
$Q_1$.
We will estimate $h$ in Lemma~\ref{l:max} below.
\begin{lemma}[A local estimate]
  \label{l:poincare-local}
  Let $\Qext = (a,0] \times B_{R_x} \times B_{R_v}$ be a cylinder such
  that $\overline{Q_1} \subset \Qext$, and let $\Psi \colon \R^{2d+1} \to [0,1]$
  be $C^\infty$, supported in $\Qext$ and $\Psi=1$ in $Q_1$.  Then for any
  function $f \in L^2 (\Qext)$ such that $\nabla_v f \in L^2 (\Qext)$ and
  $(\partial_t + v \cdot \nabla_x) f \le H$ in $\mathcal{D}' (\Qext)$
  with $H \in L^2 ((a,0] \times B_{R_x} ,H^{-1}(B_{R_v}))$, we have
  \[
    \| (f -h)_+ \|_{L^2 (Q_1)} \le C (\|\nabla_v f \|_{L^2 (\Qext)} + \|H \|_{L^2 ((a,0] \times B_{R_x} ,H^{-1} (B_{R_v}))}
  \]
  where $h$ satisfies the  Cauchy problem \eqref{e:cauchy} and  $C = c(a) (1+\|\nabla_v \Psi\|_\infty)$ for some constant $c(a)$ only  depending on $|a|$.
\end{lemma}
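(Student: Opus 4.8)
The plan is to compare $f$ with the solution $h$ of the Kolmogorov Cauchy problem \eqref{e:cauchy} by testing the distributional inequality satisfied by $f-h$ against a suitable test function, and then invoking the standard energy estimate for $\LK$. First I would introduce the cut-off: since $\Psi f$ is compactly supported in $(a,0]\times\R^{2d}$ and equals $f$ on $Q_1$, it suffices to control $(\Psi f - h)_+$ on $Q_1$, indeed on all of $(a,0]\times\R^{2d}$. Writing $w=\Psi f - h$, one computes formally $\LK w = \LK(\Psi f) - f\LK\Psi$; expanding $\LK(\Psi f) = \Psi\,\LK f + f\,\LK\Psi - 2\nabla_v\Psi\cdot\nabla_v f$ (the cross term from the Laplacian; the transport part is first order and produces no cross term), the $f\LK\Psi$ terms cancel and one is left with
\[
  \LK w = \Psi\,\LK f - 2\nabla_v\Psi\cdot\nabla_v f .
\]
Using the hypothesis $(\partial_t+v\cdot\nabla_x)f\le H$, i.e. $\LK f \le H + \Delta_v f$ in $\D'$, together with $\Psi\ge 0$, this gives the distributional inequality
\[
  \LK w \le \Psi\,H + \Psi\,\Delta_v f - 2\nabla_v\Psi\cdot\nabla_v f = \Psi H + \nabla_v\cdot(\Psi\nabla_v f) - 3\nabla_v\Psi\cdot\nabla_v f =: \nabla_v\cdot G_1 + G_0 + \Psi H,
\]
with $G_1 = \Psi\nabla_v f$ and $G_0 = -3\nabla_v\Psi\cdot\nabla_v f$ — note carefully I should redo this bookkeeping so that all error terms are of the form $\nabla_v\cdot(\text{bounded}\cdot\nabla_v f)$ or $(\text{bounded})\cdot\nabla_v f$ or the given $H$, and all controlled by $\|\nabla_v f\|_{L^2(\Qext)}$ and $\|H\|_{L^2_{t,x}H^{-1}_v}$.

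Next I would run the energy method on $w_+$. Since $w$ vanishes at $t=a$ (both $\Psi f$ and $h$ do there) and is compactly supported in $x,v$ for each time, one tests the inequality $\LK w \le (\text{RHS})$ against $w_+$. The transport part $(\partial_t+v\cdot\nabla_x)$ integrates against $w_+$ to give $\frac12\frac{d}{dt}\int w_+^2$ after integrating the $v\cdot\nabla_x$ term by parts (it vanishes, being a pure transport derivative of $\frac12 w_+^2$), using $w_+(a)=0$; the $-\Delta_v w$ part gives $\int|\nabla_v w_+|^2$. On the right, each term is estimated by Cauchy–Schwarz: $\int G_1\cdot\nabla_v w_+ \le \frac12\int|\nabla_v w_+|^2 + \frac12\|G_1\|_{L^2}^2$, absorbing the gradient; $\int G_0 w_+$ and $\int \Psi H\, w_+$ are handled by the $H^{-1}$–$H^1_v$ duality together with a Poincaré inequality in $v$ on the (bounded in $v$) support, or more simply by absorbing into $\int|\nabla_v w_+|^2$ at the cost of $\|G_0\|_{L^2}^2$ and $\|\Psi H\|_{L^2_{t,x}H^{-1}_v}^2$. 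Integrating in $t$ and taking the sup yields $\|w_+\|_{L^\infty_t L^2_{x,v}}^2 + \|\nabla_v w_+\|_{L^2}^2 \lesssim \|\nabla_v f\|_{L^2(\Qext)}^2 + \|H\|_{L^2_{t,x}H^{-1}_v(\Qext)}^2$, where the implied constant carries the announced dependence $c(a)(1+\|\nabla_v\Psi\|_\infty)$ — the factor $c(a)$ coming from the Poincaré constant in $v$ on a slab of $t$-length $|a|$ and from Grönwall-type bounds if any lower-order-in-time term survives. Restricting to $Q_1$, where $\Psi f = f$, gives $\|(f-h)_+\|_{L^2(Q_1)}^2 \le \|w_+\|_{L^2((a,0]\times\R^{2d})}^2$ and the lemma follows.

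The main obstacle is making the formal computation $\LK(\Psi f - h)\le\nabla_v\cdot G_1+G_0+\Psi H$ rigorous at the level of distributions and, in tandem, justifying that testing against $w_+$ is legitimate — $w_+$ is not an admissible test function a priori, so one must either regularize (mollify in $(t,x)$ along the transport flow, as is standard for kinetic equations, and pass to the limit), or invoke an existing energy-inequality framework for $\LK$ with $L^2_{t,x}H^{-1}_v$ right-hand side and $L^\infty_tL^2\cap L^2_{t,x}H^1_v$ solutions (this is exactly the setting recalled from \cite{am19}). A secondary technical point is the regularity and existence of $h$ solving \eqref{e:cauchy}: since $f\LK\Psi\in L^2_{t,x}H^{-1}_v$ (it is $f$ times a bounded function, after moving one $v$-derivative onto $w_+$ in the weak formulation) with compact support, classical theory for the Kolmogorov equation gives a unique $h$ in the right energy space vanishing at $t=a$; this $h$ will be estimated separately in Lemma~\ref{l:max}. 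Apart from these functional-analytic justifications, the argument is a routine energy estimate.
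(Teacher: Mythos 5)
Your proposal is correct and follows essentially the same route as the paper: localize via $w=\Psi f-h$ so that the $f\,\LK\Psi$ terms cancel, reduce to $\LK w\le \nabla_v\cdot(\text{bounded}\cdot\nabla_v f)+(\text{bounded})\cdot\nabla_v f+\Psi H$, and run the energy estimate on $w_+$ with zero data at $t=a$ (the paper merely pre-decomposes $H=\nabla_v\cdot H_1+H_0$ with controlled $L^2$ norms, which avoids estimating $\|\Psi H\|_{H^{-1}_v}$ directly, and replaces Gr\"onwall by integrating the estimate in the final time $T$ and absorbing with $\eps=-(4a)^{-1}$). The only slip is the sign of $\Psi\Delta_v f$ — the correct inequality is $\LK w\le \Psi H-\nabla_v\cdot(\Psi\nabla_v f)-\nabla_v\Psi\cdot\nabla_v f$ — which you flag yourself and which does not affect the final bound.
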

\begin{proof}
  Since $H \in L^2((a, 0]\times B_{R_x},H^{-1} (B_{R_v}))$, there  exists $H_0,H_1 \in L^2 (\Qext)$ such that  $H = \nabla_v \cdot H_1 + H_0$ and such that  $ \|H_0 \|_{L^2(\Qext)}+\|H_1 \|_{L^2(\Qext)}\leq 2\|H \|_{L^2    ((a,0] \times B_{R_x} ,H^{-1} (B_{R_v}))}$ (see for instance \cite{zbMATH01181255}).  The function $g = f \Psi$ satisfies
\[
  \LK g \le  \nabla_v \cdot \tilde H_1 + \tilde H_0 + f [(\partial_t + v \cdot \nabla_x) \Psi -  \Delta_v \Psi]
  \text{ in }   \mathcal{D}'((a,0) \times \R^{2d})
\]
with $\tilde H_1 = (H_1- \nabla_v f) \Psi$ and $\tilde H_0 = H_0 \Psi - H_1 \nabla_v \Psi - \nabla_v \Psi \cdot  \nabla_v f$. We thus get
\[ \LK (g-h)  \le \tilde H  \text{ in } \mathcal{D}'((a,0) \times \R^{2d}) 
\]
with $\tilde H = \nabla_v \cdot \tilde H_1 + \tilde H_0$. 
We then multiply by $(g-h)_+$ to get the natural energy estimate for all $T, T'\in (a,0)$ and $\eps >0$,
\begin{align*}
     \int (g-h)_+^2 (T,x,v) \dx \dv & + \int_{a}^{T'} \int |\nabla_v (g-h)_+ |^2 \dt \dx \dv \\
    \le &  2 \int_{a}^{T} \int |-\tilde H_1 \cdot \nabla_v (g-h)_+ + \tilde H_0 (g-h)_+| \dt \dx \dv \\
    \le &   \frac12 \| \nabla_v (g-h)_+\|^2_{L^2 ((a,0] \times \R^{2d})}+ 2\|\tilde H_1\|^2_{L^2 ((a,0] \times \R^{2d})}\\ 
    &+ 2\eps \| (g-h)_+ \|^2_{L^2 ((a,0] \times \R^{2d})} + \frac1{2\eps} \| \tilde H_0 \|^2_{L^2 ((a,0] \times \R^{2d})}.
  \end{align*}
Remark that we can deal with the two terms of the left hand side separately so that we can consider the two parameters $T$ and $T'$.
Writing $\|\cdot\|_{L^2}$ for $\|\cdot \|_{L^2 ((a,0) \times \R^{2d})}$, we get
  after integrating in $T$ from $a$ to $0$,  and choosing $T'=0$
  \[ \| (g-h)_+\|_{L^2}^2 \le -2\eps a
  \| (g-h)_+\|^2_{L^2 } - a \|\tilde H_1\|^2_{L^2} -
  \frac{a}{2\eps} \|\tilde H_0\|^2_{L^2 } .\]
Then remark that the function $(g-h)_+$ equals $(f -h)_+$ in $Q_1$ and that
\begin{align*}
  \| \tilde H_1 \|_{L^2((a,0] \times \R^{2d})} & \le \|H_1\|_{L^2(\Qext)} + \|\nabla_v f \|_{L^2(\Qext)}, \\
  \| \tilde H_0 \|_{L^2((a,0] \times \R^{2d})} & \le \|H_0\|_{L^2(\Qext)} + \|\nabla_v \Psi\|_\infty
                                       ( \|H_1\|_{L^2(\Qext)} + \|\nabla_v f \|_{L^2(\Qext)} ).
\end{align*}
We get the desired inequality by combining the three previous inequalities and choosing $\eps = -(4a)^{-1}$.
\end{proof}

In view of Lemma~\ref{l:poincare-local}, it is sufficient to prove
that if the function $f$ satisfies
\[|\{ f = 0 \} \cap \Qzero|  \ge \frac14 |\Qzero|, \]
then the
 function $h$ given by the Cauchy problem~\eqref{e:cauchy} is bounded from above by
$\theta_0 M$ for some universal parameter $\theta_0 \in (0,1)$.
\begin{lemma}[Control of the localization term]\label{l:max}
  Let $\eta \in (0,1]$.  There exist a (large) constant $R>1$ and a (small) constant  $\theta_0 \in (0,1)$ both depending on the dimension and $\eta$,  and a $C^\infty$ cut-off function  $\Psi \colon \R^{2d+1} \to [0,1]$, supported in $\Qext = (-1-\eta^2,0] \times B_{8R} \times B_{2R}$ and equal to $1$ in $Q_1$, such that for all non-negative bounded function
  $f: \Qext \to \R$ satisfying
  \begin{equation}
    \label{e:z-set}
    |\{ f = 0 \} \cap \Qzero|    \ge \frac14 |\Qzero|,
  \end{equation}
  the solution $h$ of the following initial value problem
  \[
      \begin{cases} 
        \LK h = f \LK \Psi & \text{ in } (-1-\eta^2,0) \times \R^{2d} \\
        h = 0 & \text{ in } \{ -1-\eta^2 \} \times \R^{2d} 
      \end{cases}
  \]
  satisfies: $h \le \theta_0 \|f\|_{L^\infty(\Qext)}$ in  $Q_1$.
\end{lemma}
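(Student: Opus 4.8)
\textbf{Proof strategy for Lemma~\ref{l:max}.}
The plan is to write $h = h(t,x,v)$ explicitly via the Kolmogorov fundamental solution, $h(t,z) = \int_{-1-\eta^2}^{t} \int \Gamma(t-s, z \circ' \zeta^{-1})\, (f\LK\Psi)(s,\zeta)\,\mathrm{d}\zeta\,\mathrm{d}s$, where $\Gamma$ is the Kolmogorov kernel and $\circ'$ is the appropriate space-time group action; then split the source term $f\LK\Psi = f[(\partial_t + v\cdot\nabla_x)\Psi - \Delta_v\Psi]$ into the region where $\Psi$ transitions from $1$ to $0$ (the support of $\nabla_v\Psi$, $\nabla_x\Psi$, $\partial_t\Psi$) and observe that on $Q_1$ the kernel is evaluated against source points lying at a definite distance (in the anisotropic Kolmogorov metric) from $Q_1$, provided $R$ is chosen large. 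The key point is then a two-sided estimate: from above, $h \le \|f\|_{L^\infty(\Qext)}$ times an integral of $\Gamma$ (and its $v$-derivative, coming from the $\nabla_v\Psi$ terms after an integration by parts in $v$) over the cut-off region, which can be made as small as we like — say $\le \frac12\theta_0$-ish — by pushing the spatial support of $\Psi$ outward, i.e. taking $R$ large; and from below, using hypothesis \eqref{e:z-set}, the \emph{bulk} contribution of the source on $\Qzero$ (where $\Psi \equiv 1$ so $\LK\Psi = 0$) contributes nothing, but more importantly one shows the only non-negligible contribution to $h$ on $Q_1$ comes from a region controlled by $\|f\|_{L^\infty}$ and the vanishing set of $f$ forces a genuine gain.

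Let me restate the mechanism more carefully, because the vanishing hypothesis must be used. First I would note $\LK\Psi$ is supported in $\Qext \setminus Q_1$, and split this into the ``side'' part $\Sigma_{\mathrm{side}}$ (where $|x|$ or $|v|$ is comparable to $R$) and the ``bottom/lateral-near'' part $\Sigma_{\mathrm{near}}$ adjacent to $Q_1$ in the time and small-velocity directions; the contribution of $\Sigma_{\mathrm{side}}$ to $h$ on $Q_1$ is bounded by $\|f\|_{L^\infty(\Qext)}$ times $\sup_{Q_1}\int_{\Sigma_{\mathrm{side}}} (|\Gamma| + |\nabla_v\Gamma|)$, which by Gaussian-type decay of $\Gamma$ in the Kolmogorov metric is $\le \varepsilon(R)\,\|f\|_{L^\infty}$ with $\varepsilon(R) \to 0$ as $R\to\infty$. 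For the remaining $\Sigma_{\mathrm{near}}$ — which sits just below $\Qzero$ and $Q_1$ in time — we use that on $\Qzero$ the function $f$ vanishes on a set of measure $\ge \frac14|\Qzero|$, so the $L^1$ mass of $f$ over $\Qzero$, and hence the source mass $\int_{\Sigma_{\mathrm{near}}} |f\LK\Psi|$, is at most $(1-c)\|f\|_{L^\infty}|\Qzero|$ for a universal $c>0$; combined with $\int_{Q_1}$-averaged bounds on $\Gamma$ this gives a contribution $\le (1 - c')\,\gamma\,\|f\|_{L^\infty}$ where $\gamma$ is an explicit geometric constant. Choosing first $R$ large so $\varepsilon(R)$ is negligible, then reading off $\theta_0 := \varepsilon(R) + (1-c')\gamma < 1$ (which is $<1$ precisely because the vanishing of $f$ removed a fixed proportion of the mass that would otherwise be there) completes the bound $h \le \theta_0\|f\|_{L^\infty(\Qext)}$ on $Q_1$.

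The main obstacle — and the place where the argument is genuinely delicate — is making the ``vanishing set $\Rightarrow$ strict gain'' step quantitative. It is not enough that $\int f$ over $\Qzero$ drops by a fixed fraction; one needs that this drop translates, through the non-negative kernel $\Gamma$, into a fixed-fraction drop of $h$ \emph{uniformly over all of $Q_1$}, including the ``corner'' of $Q_1$ farthest in the transport direction from $\Qzero$. This requires a lower bound of the form $\inf_{z \in Q_1}\inf_{\zeta \in \Qzero}\Gamma(\cdot) \ge \kappa > 0$ with $\kappa$ universal — i.e. a Harnack-type nondegeneracy of the Kolmogorov kernel between two fixed cylinders — which dictates the geometric placement of $\Qzero$ under $Q_1$ (so the group-translate $z_0^{-1}\circ\zeta$ for $z_0 \in Q_1$, $\zeta \in \Qzero$ stays in a fixed compact set on which $\Gamma$ is bounded below) and forces $R$ to absorb all the ``escaping mass'' on the sides. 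One subtlety worth flagging: the $\nabla_v\Psi$ terms in $\LK\Psi$ produce, after integrating by parts, a $\nabla_v\Gamma$ which is \emph{not} sign-definite, so those terms must be put entirely into the small error $\varepsilon(R)$ and cannot be used in the favorable direction — hence the sign information we exploit comes only from the $\Delta_v\Psi$ and $(\partial_t + v\cdot\nabla_x)\Psi$ terms, and one must check these can be arranged (by a careful choice of $\Psi$) to have the geometry described above. Alternatively, to avoid heavy reliance on the explicit form of $\Gamma$ — in keeping with the paper's stated philosophy — one can replace the kernel estimates by De Giorgi / energy arguments applied directly to the Cauchy problem for $h$ plus a mean-value-type lemma on $\Qzero$, but the bookkeeping is essentially the same and the nondegeneracy between the two cylinders remains the crux.
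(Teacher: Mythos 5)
Your proposal contains a structural error that breaks the mechanism by which the hypothesis \eqref{e:z-set} is used. You assert that $\Psi \equiv 1$ on $\Qzero$, so that $\LK\Psi = 0$ there and ``the bulk contribution of the source on $\Qzero$ contributes nothing.'' But $f$ enters the Cauchy problem only through the product $f\,\LK\Psi$: if $\LK\Psi$ vanished on $\Qzero$, the values of $f$ on $\Qzero$ --- in particular its zero set --- would be invisible to $h$, and no gain could be extracted (for $f\equiv 1$ one has $h = \Psi = 1$ on $Q_1$ exactly, with no room to spare). The paper's cut-off is built the other way around: $\Psi = \Psi_1(t,x/R,v/R)$ equals $1$ only for $t > -1$, so $\Qzero \subset \{t\le -1\}$ sits precisely in the temporal ramp of $\Psi$, where $(\partial_t + v\cdot\nabla_x)\Psi_1 \ge 1$ (after introducing a time lap $T=\eta^2/8$ so that a fixed fraction of the zero set lies in $\{t\le -1-T\}$, which is also what makes the kernel lower bound between $Q_1$ and that portion of $\Qzero$ uniform --- your uniform $\kappa$ would degenerate at the corner where both times approach $-1$). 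Your later sentence, which does try to use the vanishing of $f$ on $\Qzero$ to deplete the source mass on $\Sigma_{\mathrm{near}}$, silently contradicts the earlier claim.

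The second problem is the bookkeeping $h \le \varepsilon(R) + (1-c')\gamma$ obtained by estimating $h$ through $\int\Gamma\,|f\LK\Psi|$. Since the signed integral already equals $1$ on $Q_1$ when $f\equiv 1$, any absolute-value bound on the near contribution must return a constant $\ge 1$ unless that part of the source is sign-definite; the signs cannot be avoided. The paper arranges them by comparing $h$ with $\Psi$ itself: $\LK(h-\Psi) = (f-1)\LK\Psi$, and with the monotone cut-off the transport part $(f-1)(\partial_t+v\cdot\nabla_x)\Psi_1$ is $\le 0$ everywhere and $\le -\un_{\mathcal{Z}}$ on $\mathcal{Z} = \{f=0\}\cap\Qzero\cap\{t\le -1-T\}$, which via the kernel lower bound $\mathsf{m}$ gives $h \le 1 - \delta_0 + E_R$ with $\delta_0 = \tfrac18\mathsf{m}|\Qzero|$. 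The only sign-indefinite term is $\tfrac{1-f}{R^2}\Delta_v\Psi_1$, whose smallness comes from the amplitude factor $R^{-2}$ produced by rescaling the cut-off in $(x,v)$ --- not from Gaussian decay of $\Gamma$ over a far-away support as in your $\varepsilon(R)$ (the temporal ramp of $\Psi$ cannot be pushed away and always contributes $O(1)$). You did correctly identify the crux --- a positive lower bound for the Kolmogorov kernel between the two fixed cylinders --- but without a cut-off whose transport derivative is nonnegative everywhere and bounded below on $\Qzero$, and without the comparison to $\Psi$, the estimate does not close.
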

\begin{remark}
  This lemma is related to \cite[Lemma~3.4]{wz09} and
  \cite[Lemma~3.3]{wz11}.
\end{remark}
\begin{remark}
  The conclusion of the lemma and its proof are essentially unchanged
  under the weaker assumption
  \( |\{ f = 0 \} \cap \Qzero| \ge \alpha_0 |\Qzero| \) for some
  $\alpha_0 \in (0,1)$. 
\end{remark}
\begin{remark}
Theorem \ref{t:wpi} will be used in the proof of
  Lemma~\ref{l:pop} about the expansion of positivity of
  super-solutions. The parameter $\eta$ will be then chosen after choosing $\theta$.  
\end{remark}
The proof of Lemma~\ref{l:max} requires the following test-function
whose construction is elementary.
\begin{lemma}[Cut-off function]\label{l:cutoff}
  Given $\eta \in (0,1]$ and $T \in (0,\eta^2)$, there exists a smooth
  function
  $\Psi_1 : [-1-\eta^2,0] \times \R^{d}\times \R^d \to [0,1]$,
  supported in $[-1-\eta^2,0] \times B_{8} \times B_2$, equal to
  $1$ in $(-1,0] \times B_{1} \times B_{1}$, such that
  $(\partial_t + v \cdot \nabla_x) \Psi_1 \ge 0$ everywhere and
  $(\partial_t + v \cdot \nabla_x) \Psi_1 \ge 1$ in
  $(-1-\eta^2,-1-T] \times B_{1} \times B_{1}$.
\end{lemma}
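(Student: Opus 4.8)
The plan is to construct $\Psi_1$ explicitly as a product of a time-dependent factor and a space-velocity cut-off, exploiting the fact that the transport operator $\partial_t + v\cdot\nabla_x$ acts trivially on functions depending only on $t$. First I would pick a smooth non-decreasing function $\chi \colon [-1-\eta^2,0] \to [0,1]$ with $\chi \equiv 0$ near $t = -1-\eta^2$, $\chi \equiv 1$ on $[-1,0]$, and $\chi' \ge 1$ on the interval $[-1-\eta^2, -1-T]$ (this is possible because $(-1) - (-1-T) = T < \eta^2$, so there is enough room: one can even take $\chi$ to be, on $[-1-\eta^2,-1]$, a mollification of a piecewise-linear function with slope slightly above $1$ on $[-1-\eta^2,-1-T]$ and slope chosen so that $\chi$ reaches $1$ exactly at $t=-1$; alternatively just take $\chi' \ge 1$ on all of $[-1-\eta^2,-1]$ if $T$ is not too close to $\eta^2$, or rescale). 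Simultaneously I would fix a smooth spatial bump $\phi \colon \R^d \times \R^d \to [0,1]$ with $\phi \equiv 1$ on $B_1 \times B_1$ and $\supp \phi \subset B_8 \times B_2$.

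The natural first guess $\Psi_1(t,x,v) = \chi(t)\phi(x,v)$ does not quite work, since $(\partial_t + v\cdot\nabla_x)\Psi_1 = \chi'(t)\phi(x,v) + \chi(t)\, v\cdot\nabla_x\phi(x,v)$ and the second term has no sign. The remedy is the standard one for kinetic cut-offs: enlarge the spatial support in $x$ relative to $t$ so that, on the region where $\chi' > 0$ (namely $t \le -1$, where $|t| \ge 1$), the characteristics $s \mapsto (x - (t-s)v, v)$ emanating from the ``good'' set stay inside the region where $\phi \equiv 1$. Concretely, I would instead take $\phi$ to depend on $x$ through a slightly shrunk-and-translated argument, or more simply choose $\phi \equiv 1$ on $B_4 \times B_2 \supset \{|x - sv| \le 1, |v| \le 1, |s| \le \eta^2\}$ type sets, so that whenever $(t,x,v) \in (-1-\eta^2,-1] \times B_1 \times B_1$ we have $v\cdot\nabla_x\phi(x,v) = 0$ because $(x,v)$ lies in the flat part of $\phi$. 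Then on $(-1-\eta^2,-1]\times B_1 \times B_1$ the drift term vanishes and $(\partial_t+v\cdot\nabla_x)\Psi_1 = \chi'(t)\phi = \chi'(t) \ge 0$, with $\chi' \ge 1$ exactly on $(-1-\eta^2,-1-T]\times B_1\times B_1$ as required. On the full domain one needs $(\partial_t+v\cdot\nabla_x)\Psi_1 \ge 0$ everywhere, which forces $\chi$ to be non-decreasing and forces the support geometry so that wherever $v\cdot\nabla_x\phi < 0$ one already has $\chi'$ large enough, or — cleaner — wherever $\chi' > 0$ the factor $\phi$ is constant in the relevant directions. Making these two support constraints compatible is precisely where the choice of the large radius $8$ (versus $1$) in $B_8$ and of the velocity radius $2$ enters: $8 > 1 + 1\cdot(1+\eta^2) + (\text{mollification margin})$ and $2 > 1 + \text{margin}$.

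The key steps in order are therefore: (i) fix the spatial bump $\phi$ with $\phi \equiv 1$ on a box large enough to contain all transport trajectories of length $\le 1+\eta^2$ starting from $B_1\times B_1$, and with support in $B_8\times B_2$; (ii) fix the time profile $\chi$ non-decreasing, $0$ near the bottom, $1$ on $[-1,0]$, with $\chi' \ge 1$ on $[-1-\eta^2,-1-T]$; (iii) set $\Psi_1 = \chi\,\phi$ and compute $(\partial_t+v\cdot\nabla_x)\Psi_1 = \chi'\phi + \chi\, v\cdot\nabla_x\phi$; (iv) observe that on $\supp\chi' \cap \{\text{anything in }B_1\times B_1 \text{ region, trajectories}\}$ the second term vanishes, giving the lower bounds $\ge 0$ and $\ge 1$; (v) check the claimed support and boundary-value properties, which are immediate from the construction.

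The main obstacle — really the only subtlety — is reconciling the non-negativity of $(\partial_t+v\cdot\nabla_x)\Psi_1$ \emph{everywhere} (not just on the inner cylinder) with the requirement that $\Psi_1$ be compactly supported in a bounded $x$-ball: a cut-off in $x$ alone generically produces a term $\chi\, v\cdot\nabla_x\phi$ of indefinite sign near the lateral boundary. The fix is to let all the ``descent'' of $\Psi_1$ happen through the time variable (where the derivative has a favorable sign by monotonicity of $\chi$) in the region $t \le -1$, and to keep $\Psi_1$ equal to $1$ on $[-1,0]$ so that there $\partial_t\Psi_1 = 0$ and only the $x$-cutoff contributes — but there one can afford $v\cdot\nabla_x\phi$ of either sign only if it is still $\ge 0$, which again one arranges by taking $\phi$ to depend on $x$ only through the combination that the transport flow preserves, i.e. a cut-off of the form $\phi(x,v) = \tilde\phi(x - t_*v, v)$ is tempting but breaks smoothness in $t$; the honest resolution, as in \cite{gimv,is}, is simply to choose the $x$-radius $8$ so generously that on the support of $\chi$ restricted to $t \le -1$ the flow never exits $\{\phi \equiv 1\}$, and on $t > -1$ to note that $\partial_t \Psi_1 = 0$ and absorb $v\cdot\nabla_x\phi$ by choosing $\phi$ radially monotone so that $v\cdot\nabla_x\phi$ and the inward normal align appropriately — or, most simply, to weaken ``everywhere'' in the proof to ``on the support of $f$'' if the application permits, as the remark after Lemma~\ref{l:max} suggests the precise constants are flexible. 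In any case this is a routine if slightly fiddly explicit construction, which is why the statement describes it as ``elementary''.
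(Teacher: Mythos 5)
You correctly isolate the real difficulty: with a naive product $\chi(t)\phi(x,v)$ the transport operator produces the term $\chi(t)\,v\cdot\nabla_x\phi(x,v)$, which has no sign near the lateral boundary of the support, while the lemma requires $(\partial_t+v\cdot\nabla_x)\Psi_1\ge 0$ \emph{everywhere}. This global sign is genuinely used afterwards (in the proof of Lemma~\ref{l:max}, to get $P_R\ge 0$ from $\LK P_R=(1-f)(\partial_t+v\cdot\nabla_x)\Psi_1$ with $0\le f\le 1$), so it cannot be relaxed to ``on the support of $f$''. None of the fixes you offer closes this gap. Enlarging the radius to $8$ only guarantees $\phi\equiv 1$ along trajectories issued from $B_1\times B_1$; it says nothing on the annular region where $\nabla_x\phi\neq 0$. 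Choosing $\phi$ ``radially monotone'' cannot make $v\cdot\nabla_x\phi\ge 0$ for all $(x,v)$ in the support: at any point where $\nabla_x\phi(x,v)\neq 0$, reversing the direction of $v$ flips the sign of the product. And weakening ``everywhere'' changes the statement.

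The resolution, which you brush against and then discard for the wrong reason, is to make the spatial factor constant along the free-transport flow by composing with the Galilean-invariant combination $x-tv$. The paper takes $\Psi_1(t,x,v)=\varphi_1(t)\,\varphi_2(x-tv)\,\varphi_3(v)$ with $\varphi_1$ non-decreasing, equal to $1$ on $[-1,0]$, vanishing at $-1-\eta^2$ and with $\varphi_1'=1$ on $[-1-\eta^2,-1-T]$, $\varphi_2$ supported in $B_4$ and equal to $1$ on $B_3$, and $\varphi_3$ supported in $B_2$ and equal to $1$ on $B_1$. Since
\[
(\partial_t+v\cdot\nabla_x)\bigl[\varphi_2(x-tv)\bigr]=-v\cdot\nabla\varphi_2(x-tv)+v\cdot\nabla\varphi_2(x-tv)=0
\]
and $\varphi_3(v)$ is also annihilated by the transport operator, one gets $(\partial_t+v\cdot\nabla_x)\Psi_1=\varphi_1'(t)\,\varphi_2(x-tv)\,\varphi_3(v)\ge 0$ everywhere, and this equals $\varphi_1'(t)\ge 1$ on $(-1-\eta^2,-1-T]\times B_1\times B_1$ because there $|x-tv|\le 1+(1+\eta^2)\le 3$. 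Your objection that such a cut-off ``breaks smoothness in $t$'' is unfounded: $(t,x,v)\mapsto\varphi_2(x-tv)$ is the composition of a $C^\infty$ function with a polynomial map, hence $C^\infty$; the support bound $|x|<4+|t|\,|v|<8$ and the value $1$ on $(-1,0]\times B_1\times B_1$ then follow at once. As written, your proposal does not produce a function satisfying all the stated properties.
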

\begin{proof}
  Consider
  $\Psi_1 (t,x,v) = \varphi_1 (t) \varphi_2 (x-tv) \varphi_3 (v)$ with
\begin{itemize}
\item a smooth function $\varphi_1: [-1-\eta^2,0] \to [0,1]$ equal to $1$ in $[-1,0]$ with $\varphi_1 (-1-\eta^2) = 0$,
  $\varphi_1' \ge 0$ in $[-1-\eta^2,0]$ and $\varphi_1'=1$ in  $[-1-\eta^2,-1-T]$;
\item a smooth function  $\varphi_2 : \R^d \to [0,1]$ supported in $B_{4}$ and equal to $1$ in $B_3$;
\item a smooth function  $\varphi_3 : \R^d \to [0,1]$ supported in $B_{2}$ and equal to $1$ in $B_1$.
\end{itemize}
It is then easy to check that the conclusion of the lemma holds true. 
\end{proof}
We can now turn to the proof of Lemma~\ref{l:max}.
\begin{proof}[Proof of Lemma~\ref{l:max}]
  If $f=0$ in $\Qext$, then $h=0$. We thus can assume from
  now on that $f$ is not identically $0$. We next reduce to the case
  $\|f\|_{L^\infty(\Qext)}=1$ by considering $f/\|f\|_{L^\infty(\Qext)}$.

  We  introduce a time lap $T$ between the top of the cylinder $\Qzero$ and the bottom of the
  cylinder $Q_1$, see Figure~\ref{fig:1}. 
  \begin{figure}
\centering{\includegraphics[height=2.5cm]{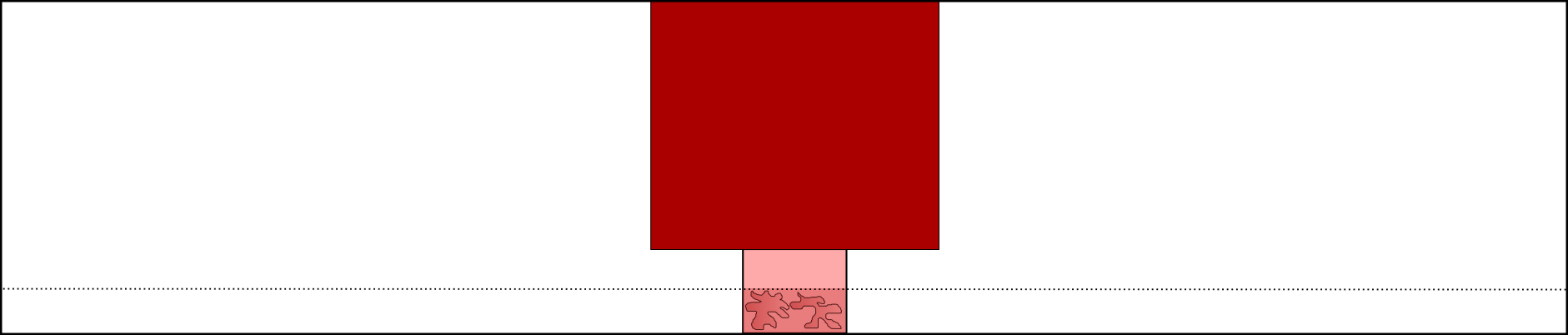}}
\put(-330,59){$\Qext$}
\put(-175,40){\huge $Q_1$}
\put(-62,14){$t=-1-T$}
\put(-203,5){$\Qzero$}
\caption{Reducing to the case with a time lap.}
\label{fig:1}
\end{figure}

Fix $T=\eta^2/8$. Since $|\Qzero \cap \{ t \ge -1-T\}| = \frac18|\Qzero|$, then
  \begin{equation}
    \label{e:z-set-bis}
    |\{ f = 0 \} \cap \Qzero \cap \{t \le -1-T\}|    \ge \frac18 |\Qzero|.
  \end{equation}
  
Let $R>1$ to be chosen later. We consider the cut-off function
\[
 \Psi (t,x,v) = \Psi_1 (t,x/R,v/R).
\]
  Remark that $\Psi$ is supported in $\Qext$ and equal to $1$ in  $(-1,0] \times B_{R} \times B_{R}$. Moreover,
  \[
    \LK \Psi (t,x,v) = (\partial_t + v \cdot \nabla_x) \Psi_1 (t,x/R,v/R) - R^{-2} \Delta_v \Psi_1 (t,x/R,v/R),
  \]
  where in the last equation $v \cdot \nabla_x$ means the scalar product of the value of third variable in $\Psi$ (here it is $\frac{v}{R}$) with the gradient in the second variable.  
  We then have
  \[
    \LK (h- \Psi) = -(1-f)  (\partial_t + v \cdot \nabla_x) \Psi_1 (t,x/R,v/R) + \frac{1-f}{R^2} \Delta_v \Psi_1 (t,x/R,v/R)
  \]
  and we can write
  \[h - \Psi = -P_R + E_R \] ($P_R$ for positive and $E_R$ for error)
  with $P_R$ and $E_R$ solutions of the following Cauchy problems in
  $(-1-\eta^2,0) \times \R^{2d}$,
  \begin{align*}
    \LK P_R &= (1-f) (\partial_t + v \cdot \nabla_x) \Psi_1(t,x/R,v/R), \\
    \LK E_R &= \frac{1-f}{R^2} \Delta_v \Psi_1 (t,x/R,v/R),
  \end{align*}
  and $P_R = E_R =0$ at time  $t=-1-\eta^2$.
  \bigskip
   
  We claim that there exist constants $C>0$ and  $\delta_0 >0$ depending on the dimension and $\eta$ (in particular independent of $R$) such that
\begin{equation}\label{e:claim} 
E_R \le C R^{-2} \quad \text{ and } \quad P_R \ge \delta_0 \quad  \text{ in } Q_1.
\end{equation}
As far as the estimate of $E_R$ is concerned, it is enough to remark
that $\LK E_R \le C_0 R^{-2}$ for some constant
$C_0= \| \Delta_v \Psi_1 \|_{L^\infty}$ only depending on
$d,\lambda,\Lambda, \eta$ (in particular not depending on $R$). The
maximum principle then yields the result for some universal constant
$C$.  As far as $P_R$ is concerned, we remark that
\[
\LK P_R \ge   \un_{\mathcal{Z}} \quad \text{ in } (-1-\eta^2,0] \times \R^{2d}
\]
where $\mathcal{Z} = \{f = 0\} \cap \Qzero \cap \{ t \le -1-T\}$.  We
use here the fact that $(\partial_t + v \cdot \nabla_x) \Psi_1 \ge 1$
in $(-1-\eta^2,-1-T) \times \R^{2d}$.  Let $P$ be such that
$\LK P = \un_{\mathcal{Z}}$ in $(-1-\eta^2,0] \times \R^{2d}$ and
$P=0$ at the initial time $-1-\eta^2$. The maximum principle implies
that $P_R \ge P$ in the time interval $(-1-\eta^2,0]$, and in
particular in $Q_1$. The strong maximum principle implies that
$P \ge \delta_0$ in $Q_1$ for some constant
$\delta_0>0$ depending on the dimension and $\eta$. Indeed, one can use the fundamental solution $\Gamma$ of
the Kolmogorov equation and write
\[ P (t,x,v) = \int \Gamma (z,\zeta) \un_{\mathcal{Z}} (\zeta) \dd
  \zeta \ge \frac18 \mathsf{m} |\Qzero| = \delta_0,\]
  with $\mathsf{m}= \min\limits_{Q_1 \times \Qzero\cap \{ t \le -1-T\}} \Gamma.$ The claim \eqref{e:claim} is now proved.  \bigskip

Inequalities from \eqref{e:claim} imply that
  \[ h \le 1 - \delta_0 + C R^{-2} \text{ in } Q_1.\] This yields the
  desired result with $\theta_0 = 1-\delta_0/2$ for $R$ large
  enough. Note in particular that $R$ only depends on $C$ and
  $\delta_0$ and consequently depends on the dimension and $\eta$.
\end{proof}

\section{The weak Harnack inequality}
\label{s:dg}

Before proving the weak Harnack inequality stated in
Theorem~\ref{t:weak-harnack}, we investigate how Eq.~\eqref{e:main}
expands  positivity of super-solutions.
\begin{figure}[h]
\centering{\includegraphics[height=3.5cm]{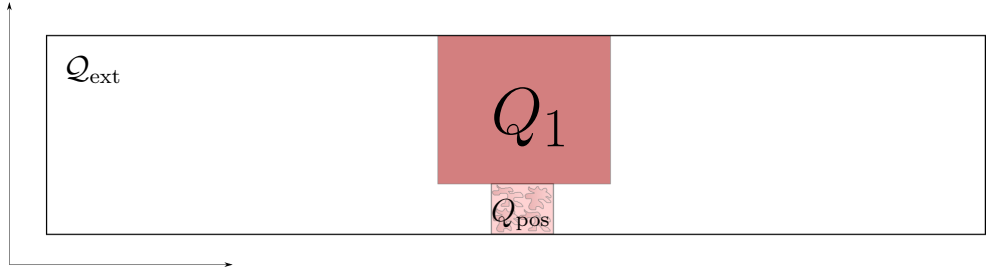}}
\put(-345,71){$\Qbext$}
\put(-185,50){\huge $Q_1$}
\put(-185,17){$\Qpos$}
\caption{Geometric setting of the expansion of positivity lemma. It is
  the same as the one of the weak Poincaré inequality, except that
  $\Qext$ and $\Qzero$ are replaced with $\Qbext$ and $\Qpos$. Here,
  $\Qpos$ denotes a set ``in the past'' where $\{f \ge 1\}$ occupies
  half of it.}
  \label{fig:pop}
\end{figure}
\begin{lemma}[Expansion of positivity]\label{l:pop}
  Let $\theta \in (0,1]$ and
  $\Qpos = (-1-\theta^2,-1] \times B_{\theta^3} \times B_{\theta}$ and
  let $R$ be the constant given by Lemma~\ref{l:max} which depends on
  $\theta$, $d$, $\lambda$, $\Lambda$. There exist
  $\eta_0,\ell_0 \in (0,1)$, only depending on $\theta$, $d$,
  $\lambda$, $\Lambda$, such that for
  $\Qbext := (-1-\theta^2,0] \times B_{9 R} \times B_{3 R}$ and any
  non-negative super-solution $f$ of \eqref{e:main} in some
  cylindrical open set $\Omega \supset \Qbext$ and
  $\|S\|_{L^\infty (\Qbext)} \le \eta_0$ and such that
  \( |\{ f \ge 1 \} \cap \Qpos | \ge \frac12 |\Qpos|,\) we have
  $f \ge \ell_0$ in $Q_1$.
\end{lemma}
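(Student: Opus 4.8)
The plan is to apply the weak Poincaré inequality (Theorem~\ref{t:wpi}) to the log-transform of the super-solution $f$, and then turn the resulting control of $\|(g - \theta_0 M)_+\|_{L^2(Q_1)}$ into a pointwise lower bound for $f$ by combining it with the $L^2$–$L^\infty$ estimate for sub-solutions (Proposition~\ref{p:l2linfty}) and a bootstrap on the parameter $\theta_0$. First I would fix $\eps \in (0,1/4]$ (to be chosen), set $g = G(\eps + f)$ where $G$ is the convex function of Lemma~\ref{l:G}, and invoke Lemma~\ref{l:logtrans}: $g$ is a sub-solution of $(\partial_t + v\cdot\nabla_x) g + \lambda|\nabla_v g|^2 \le \nabla_v\cdot(A\nabla_v g) + B\cdot\nabla_v g + \eps^{-1}|S|$ on $\Qbext$. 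The hypothesis $|\{f \ge 1\}\cap\Qpos| \ge \frac12|\Qpos|$ translates, since $G$ is supported in $(0,1]$ and $f \ge 1 \Rightarrow \eps + f > 1 \Rightarrow g = 0$, into $|\{g = 0\}\cap\Qpos| \ge \frac12|\Qpos| \ge \frac14|\Qpos|$, which is exactly the vanishing hypothesis needed in Theorem~\ref{t:wpi} with $\eta = \theta$ (so $\Qpos$ plays the role of $\Qzero$ and $\Qbext \supset \Qext = (-1-\theta^2,0]\times B_{8R}\times B_{2R}$). Here I must record $H \in L^2_{t,x}H^{-1}_v$ as the distributional remainder: $(\partial_t + v\cdot\nabla_x)g \le \nabla_v\cdot(A\nabla_v g) + B\cdot\nabla_v g + \eps^{-1}|S| - \lambda|\nabla_v g|^2$, so $\|H\|_{L^2_{t,x}H^{-1}_v(\Qext)} \lesssim \|\nabla_v g\|_{L^2(\Qext)} + \eps^{-1}\|S\|_{L^\infty} + \||\nabla_v g|^2\|_{L^2_{t,x}H^{-1}_v}$.

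**The second step** is to absorb all the $\nabla_v g$ terms. By Lemma~\ref{l:ee-g}, $\|\nabla_v g\|^2_{L^2(\Qint)} \lesssim \int_{\Qext} g + 1 + \eps^{-1}\|S\|_{L^\infty}$ on a slightly smaller cylinder $\Qint$, and since $0 \le g \le G(\eps) \sim -\ln\eps$ is bounded, $\int_{\Qext} g \lesssim |\Qext| G(\eps)$; thus $\|\nabla_v g\|_{L^2}$ is bounded by a constant depending only on $\eps$ (and dimension, ellipticity), provided $\|S\|_{L^\infty} \le \eta_0$ is small. The troublesome term is $\||\nabla_v g|^2\|_{L^2_{t,x}H^{-1}_v}$; I expect to handle it by a standard interpolation/Sobolev embedding in $v$ (in dimension $d$, $|\nabla_v g|^2 \in L^1_v$ controlled in $H^{-1}_v$ via $L^{2d/(d+2)}_v$ when $d \ge 3$, or directly by boundedness of $g$), reducing it again to $\|\nabla_v g\|^2_{L^2}$ times $\|g\|_{L^\infty}$. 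Putting these together, Theorem~\ref{t:wpi} gives $\|(g - \theta_0 M)_+\|_{L^2(Q_1)} \le C(\eps)$ with $M = \|g\|_{L^\infty(Q_1)} \le G(\eps)$; the key point is that the right-hand side does \emph{not} depend on the lower bound for $f$ we are trying to prove — it is a fixed finite number once $\eps$ is fixed and $\|S\|_{L^\infty}$ is small.

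**The decisive step** is the following dichotomy, in the spirit of Kruzhkov/Moser. The inequality $\|(g - \theta_0 M)_+\|_{L^2(Q_1)} \le C(\eps)$ says that $g$ cannot be large on a large portion of $Q_1$: precisely, $|\{g > \theta_0 M + k\}\cap Q_1| \le C(\eps)^2/k^2$ for all $k > 0$. Choosing $\eps$ small enough that $G(\eps) = M_0$ is a large universal constant, and then $k$ a suitable multiple of $M_0$, I can make the superlevel set $\{g > \tfrac12 M_0\}$ occupy an arbitrarily small fraction of $Q_1$ — in particular small enough to trigger a De Giorgi-type measure-to-pointwise improvement. Concretely, $g$ being a sub-solution of a Fokker–Planck equation (up to the harmless lower-order terms and the favorable sign of $\lambda|\nabla_v g|^2$), truncations $(g - \ell)_+$ are sub-solutions too, so Proposition~\ref{p:l2linfty} applied on nested cylinders inside $Q_1$ down to the smaller cylinder capturing $Q_1$ (recall $R_0$ is taken large for exactly this reason) yields $\esssup_{Q_1} g \le C(\|(g - \tfrac12 M_0)_+\|_{L^2(Q_1)} + \eps^{-1}\|S\|_{L^\infty}) + \tfrac12 M_0$; since $\|(g-\tfrac12 M_0)_+\|_{L^2}$ is now $\le C(\eps) = C(M_0)$ but the measure of its support is tiny, an iteration (or a direct comparison with a fixed fraction threshold) forces $\esssup_{Q_1} g \le (1 - \sigma)M_0$ for some universal $\sigma \in (0,1)$. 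Unwinding, $g \le (1-\sigma)M_0$ on $Q_1$ means $G(\eps + f) \le (1-\sigma)M_0$, i.e. $\eps + f \ge G^{-1}((1-\sigma)M_0) =: 2\ell_0 > 0$, whence $f \ge 2\ell_0 - \eps \ge \ell_0$ after shrinking $\eps$ once more. This fixes $\eps$, hence $\eta_0$ and $\ell_0$, all depending only on $\theta$, $d$, $\lambda$, $\Lambda$.

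**The main obstacle** I anticipate is the passage from the $L^2(Q_1)$ bound on $(g - \theta_0 M)_+$ to the pointwise bound on $g$ over all of $Q_1$: the weak Poincaré inequality is stated on $Q_1$ with the gain happening relative to $\theta_0 M$, but $Q_1$ is not compactly contained in itself, so one cannot directly feed it into Proposition~\ref{p:l2linfty}. The resolution — and the reason $R_0$ is chosen large in Theorem~\ref{t:weak-harnack} — is to iterate the whole scheme: the weak Poincaré inequality can be re-applied on a sequence of cylinders $Q_r(z)$ for $z$ ranging over $Q_1$ and $r$ small, using the Galilean and scaling invariances, so that one first obtains $g \le (1-\sigma)M_0$ on a slightly smaller cylinder, and then a further De~Giorgi iteration (or a second application of the weak Poincaré inequality with $M$ replaced by $(1-\sigma)M_0$) propagates the improvement to $Q_1$. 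The book-keeping of which cylinders appear ($\Qbext \supset \Qext$, the inclusion $\overline{Q_1} \subset \Qext$, the enlargement $B_{9R}\times B_{3R}$ versus $B_{8R}\times B_{2R}$) is routine but must be done carefully so that every invocation of Theorem~\ref{t:wpi} and Proposition~\ref{p:l2linfty} is legitimate.
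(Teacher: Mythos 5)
Your skeleton --- log-transform $g=G(\eps+f)$, translation of the measure hypothesis into $|\{g=0\}\cap\Qzero|\ge\frac14|\Qzero|$, the weak Poincar\'e inequality, the mass estimate of Lemma~\ref{l:ee-g}, and the $L^2$--$L^\infty$ bound --- is exactly the paper's, but two steps as you propose to execute them would fail. First, you keep $-\lambda|\nabla_v g|^2$ inside the distributional remainder $H$ and propose to bound $\||\nabla_v g|^2\|_{L^2_{t,x}H^{-1}_v}$ by Sobolev embedding. This does not work: $|\nabla_v g|^2$ is only known to lie in $L^1_v$, which does not embed into $H^{-1}_v$ for $d\ge 2$, and the embedding $L^{2d/(d+2)}_v\hookrightarrow H^{-1}_v$ would require $\nabla_v g\in L^{4d/(d+2)}_v$, which you do not have. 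The fix is free: the term has a favorable sign, so you simply discard it and take $H=\nabla_v\cdot(A\nabla_v g)+B\cdot\nabla_v g+\eps^{-1}|S|$, whose $L^2_{t,x}H^{-1}_v$ norm is $\lesssim\|\nabla_v g\|_{L^2}+\eps^{-1}\|S\|_{L^\infty}$; this is what the paper does.

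Second, the cylinder mismatch you correctly flag as the ``main obstacle'' is resolved incorrectly. Your fix --- re-running the weak Poincar\'e inequality on a family of small cylinders $Q_r(z)$ for $z$ ranging over $Q_1$ --- does not close, because the vanishing hypothesis of Theorem~\ref{t:wpi} is tied to the specific set $\Qzero$ at the bottom of the configuration: for a generic small $Q_r(z)$ inside $Q_1$, the rescaled $\Qzero$ lies nowhere near $\Qpos$ and you have no information that $g$ vanishes there. (Nor is $R_0$ relevant here; it only enters the stacking argument of the weak Harnack proof.) The paper's resolution is a single global rescaling: apply Proposition~\ref{p:l2linfty} from $Q_{1+\iota}$ down to $Q_1$, then the weak Poincar\'e inequality rescaled so that its $Q_1$ becomes $Q_{1+\iota}$ and its $\Qext$ becomes $S_{(1+\iota)}(\Qext)\subset\Qbext$, with $\iota$ and $\eta$ chosen so that $S_{(1+\iota)}(\Qzero)$ still meets $\{f\ge1\}\supset\{g=0\}$ in at least a quarter of its measure --- which is precisely why Theorem~\ref{t:wpi} is stated with the fraction $\frac14$ while the lemma assumes $\frac12$. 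Once this is in place, your Chebyshev-plus-De Giorgi ``decisive step'' is an unnecessary detour: $\|(g-\theta_0 G(\eps))_+\|_{L^2(Q_{1+\iota})}\lesssim\sqrt{G(\eps)}$ feeds directly into Proposition~\ref{p:l2linfty} to give $\sup_{Q_1}g\le\theta_0 G(\eps)+C\sqrt{G(\eps)}$, and the sublinearity in $G(\eps)$ --- the whole point of Lemma~\ref{l:ee-g}, which your write-up never makes explicit --- closes the argument for $\eps$ small.
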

\begin{remark}
  The parameter $\theta$ will be chosen in such a way that the stacked
  cylinder $\overline \Qpos^m$ is contained in $Q_1$(see the
  definition of stacked cylinders in Appendix \ref{a:ink}).  The
  cylinder ${\overline \Qpos}^m$ can be thought of as the union of $m$
  copies of $\Qpos$ stacked above (in time) of $\Qpos$.  Such
  cylinders are used in the covering argument used in the proof of the
  weak Harnack inequality and the parameter $m \in \mathbb{N}$ only
  depends on dimension.
\end{remark}
\begin{proof}[Proof of Lemma~\ref{l:pop}]
  We consider $g = G(f+\eps)$. We remark that $g \le G(\eps)$ since
  $f$ is non-negative and $G$ is non-increasing. We also remark that
$|G'(f+\eps)|\le |G'(\eps)| \le \eps^{-1}$ since $f \ge 0$, see Lemma~\ref{l:G}.

  We know from Lemma~\ref{l:logtrans} that $g$ is a non-negative
  sub-solution of \eqref{e:main} with $S$ replaced with
  $S G'(f+\eps)$. In particular,
  $(\partial_t + v \cdot \nabla_x )g \le H$ with
  $H = \nabla_v \cdot (A \nabla_vg) + B\cdot \nabla_v g+ \eps^{-1}|S|$.
  Recall that for a set $Q \subset \R^{2d+1}$, $S_{(r)}(Q)=\{(r^2t,r^3x,rv) \quad \mbox{for } z=(t,x,v)\in Q \}. $
We introduce $\eta \in (0,\frac{\theta}{2})$ and $\iota >0$ two parameters depending on $\theta$ to chosen later in the proof. 

We are going to apply successively: the $L^2-L^\infty$ estimate from  $Q_1$ to a slightly larger cylinder $Q_{1+\iota}$ for an accurate choice of $\iota$; the (scaled) weak  Poincaré inequality in the big cylinder  $\Qtext = S_{(1+\iota)}(\Qext)$ with $\Qext = (-1-\eta^2, 0] \times B_{8R} \times B_{2R}$.  Then  we estimate the $L^2$-norm of $\nabla_v g$ by the square root of its  mass in a cylinder larger than $\Qtext$, namely  $S_{(1+\iota)^2}(\Qext)\subset \Qbext$. This is illustrated in Figure~\ref{fig:pop-plus}.
\begin{figure}[h]
\centering{\includegraphics[height=2.5cm]{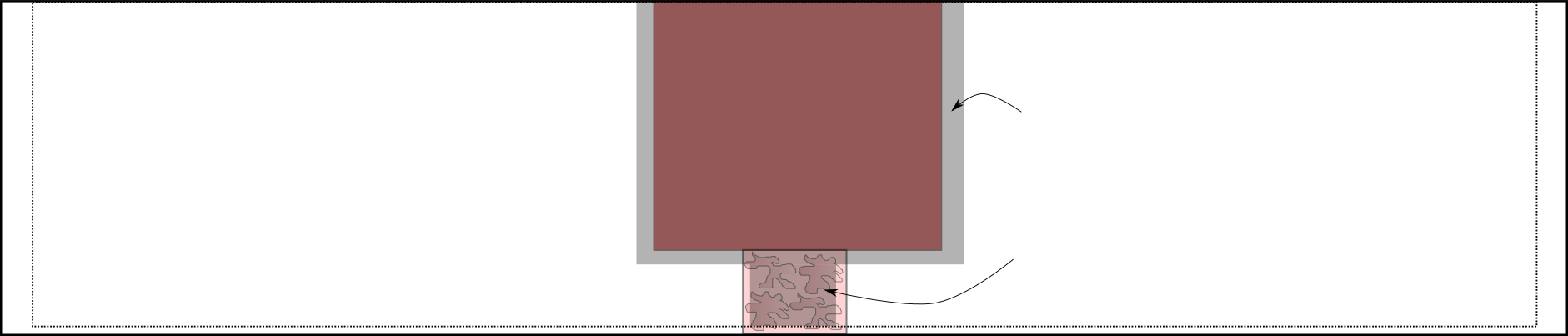}}
\put(-355,59){$\Qbext$}
\put(-322,59){$\Qtext$}
\put(-175,40){\huge $Q_1$}
\put(-114,40){$S_{(1+\iota)}(Q_1)$}
\put(-200,7){$\Qpos$}
\put(-120,20){$S_{(1+\iota)}(\Qzero)$}
\caption{Intermediate cylinders in the proof of the expansion of
  positivity. The great parts are obtained after a scaling with a
  parameter $1+\iota$ close to $1$. This is necessary in order to keep
  $f$ vanishing in a ``good ratio'' of $S_{(1+\iota)}(\Qzero)$.}
  \label{fig:pop-plus}
\end{figure}

The remainder of the proof is split into several steps. We explain in \textsc{Step 1} how to choose $\iota$ to ensure that cylinders are properly ordered and $\eta$ so that we retain enough information from the assumption \( |\{ f \ge 1 \} \cap \Qpos | \ge \frac12 |\Qpos|\). We then apply the aforementioned successive estimates in \textsc{Step 2}, before deriving the lower bound on $f$ in \textsc{Step~3}. 
\medskip

\noindent \textsc{Step 1.}
Choose $\iota$ small enough so that $\Qext \subset \Qtext \subset S_{(1+\iota)^2}(\Qext)\subset \Qbext$. We only need to check the last inclusion. We choose $\iota >0$ small enough so that $(1+\iota)^4 (1+\eta^2)\leq 1+\theta^2 $, $2(1+\iota)^2\le 3$ and $8(1+\iota)^6\le 9$. Since $\eta\in (0,\frac{\theta}{2})$,  to satisfy the first inequality it is enough to satisfy $(1+\iota)^4 \left(1+\left(\frac{\theta}{2}\right)^2\right)\leq 1+\theta^2 $.  So we pick $\iota = \min \left(\frac{4(1+\theta^2)}{4+\theta^2}-1,\left(\frac{9}{8}\right)^{1/6}-1,\left(\frac{3}{2}\right)^{1/2}-1\right).$

Recall that  $\Qzero = (-1-\eta^2,-1] \times B_{\eta^3} \times B_\eta$. In  particular,  $S_{(1+\iota)}(\Qzero) = (-(1+\iota)^2(1+\eta^2),-(1+\iota)^2] \times  B_{(1+\iota)^3\eta^3} \times B_{(1+\iota) \eta}$.  We next pick    $\eta \in (0,1)$ small enough so that
  \[
    |\Qpos \setminus S_{(1+\iota)} (\Qzero)| \ge \frac14 |S_{(1+\iota)}  (\Qzero)|.
  \]
It is then enough to satisfy $\theta \ge (5/4)^{1/5} (1+\iota) \eta$ so we pick $\eta = (5/4)^{-1/5} (1+\iota)^{-1}\theta $.
 In particular, the previous  volume condition implies that
  \[
    | \{ g =0 \} \cap S_{(1+\iota)}(\Qzero)|\ge |\{f \ge 1 \} \cap S_{(1+\iota)}(\Qzero)| \ge \frac14 |S_{(1+\iota)}(\Qzero)|.
  \]

\noindent \textsc{Step 2.}  
  With such an information at hand, we know that there exists
  $\theta_0 \in (0,1)$, only depending on $\eta$ and thus only
  depending on $\theta$, such that
  \begin{align*}
   \esssup_{Q_1} (g-  \theta_0 G(\eps))_+ 
&   \lesssim \|(g-\theta_0 G(\eps))_+\|_{L^2 (Q_{1+\iota})} +\eta_0  &  \text{from  Proposition~\ref{p:l2linfty}} \\
&   \lesssim_\iota  \|\nabla_v g \|_{L^2(\Qtext)} + (\eta_0/\eps) + \eta_0  & \text{from Theorem~\ref{t:wpi}} \\  
&  \lesssim  \left( \int_{\Qbext} g + 1+ \eta_0 / \eps\right)^{\frac12}+ (\eta_0 /\eps ) + \eta_0   & \text{from Lemma~\ref{l:ee-g}} \\
&  \lesssim  \left( G(\eps) + 2\right)^{\frac12}+2   & \text{for $\eta_0 \le \eps \le 1$} \\
&   \lesssim  \sqrt{G(\eps)} & \text{for $\eps$ such that $G(\eps)\geq 2$}.
\end{align*}
Remark that it has been necessary to scale $g$ before applying
Theorem~\ref{t:wpi}. This generates a constant depending on
$\iota$. This is emphasized by writing $\lesssim_\iota$.  But $\iota$
only depends on dimension, $\lambda$, $\Lambda$ and $\theta$.
\medskip

\noindent \textsc{Step 3.}
The previous computation yields
  \[ g \le C \sqrt{G(\eps)} + \theta_0 G (\eps) \quad \text{ in } Q_1\]
  for some $\theta_0 \in (0,1)$ depending on universal constants and $\theta$.
Since $G(\eps) \to +\infty$ (we can pick $\eps$ and $\eta_0$ small enough only depending on the universal constants and $\theta$) , we thus have 
\[G(f+\eps)=  g \le  \frac{1+2\theta_0}3 G (\eps) \quad \text{ in } Q_1.\]
Now recall that $G(t) \sim - \ln t$ as $t \to 0+$ and $-G'(t)\leq \frac{1}{t}$ for $t\in \left(0,\frac{1}{4}\right]$.  The previous inequality thus
implies that as $\eps \to 0+$,
\[ \ln (f+\eps) \ge \frac{2+\theta_0}3 \ln \eps .\]

This yields the result with $\ell_0 = \eps^{\frac{2+\theta_0}3} - \eps>0$. 
\end{proof}
Before iterating the lemma of expansion of positivity, we state and
prove a straightforward consequence of it that will be used when
applying the Ink-spots theorem. 
\begin{lemma}\label{l:minima-measure}
  Let $m \ge 3$ be an integer and $R$ be given by Lemma \ref{l:pop} for $\theta=m^{-1/2}$. There exists a constant $M>1$ only depending on $m$, $d$, $\lambda$, $\Lambda$ such that  for all non-negative super-solution $f$ \eqref{e:main} with $S=0$ in some cylindrical open set  $\Omega \supset  (-1,m] \times B_{9Rm^{3/2}} \times B_{3Rm^{1/2}}$,  such that
  \[
    |\{ f \ge M \} \cap Q_1 | \ge \frac12 |Q_1|,
  \]
then $f \ge 1$ in $\bar Q_1^m = (0,m] \times B_{m+2} \times B_1$.  
  \end{lemma}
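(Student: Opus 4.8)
The plan is to derive Lemma~\ref{l:minima-measure} from a \emph{single} application of the expansion of positivity (Lemma~\ref{l:pop}) with the choice $\theta = m^{-1/2}$, after an affine change of variables that turns $Q_1$ --- the cylinder where $\{f\ge M\}$ occupies half the measure --- into the small ``past'' cylinder $\Qpos$ of that lemma. Set $\theta := m^{-1/2}\in(0,1)$ (recall $m\ge3$), let $R>1$ and $\ell_0\in(0,1)$ be the constants given by Lemma~\ref{l:pop} for this value of $\theta$, and put
\[
  \psi(t,x,v) := \left( m(t+1),\ m^{3/2}x,\ m^{1/2}v \right),
\]
which is the composition of the time translation $z\mapsto (1,0,0)\circ z$ with the scaling $S_{m^{1/2}}$. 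A short computation then shows that $\psi$ maps $\Qpos = (-1-\theta^2,-1]\times B_{\theta^3}\times B_\theta$ onto $Q_1$, maps $\Qbext = (-1-\theta^2,0]\times B_{9R}\times B_{3R}$ onto $(-1,m]\times B_{9Rm^{3/2}}\times B_{3Rm^{1/2}}$, and maps $Q_1$ onto $(0,m]\times B_{m^{3/2}}\times B_{m^{1/2}}$.

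I would then set $g := f\circ\psi$. Since $f$ is a weak super-solution of \eqref{e:main} on a cylindrical open set containing $\psi(\Qbext)=(-1,m]\times B_{9Rm^{3/2}}\times B_{3Rm^{1/2}}$, the time-translation and scaling invariances of the class \eqref{e:main} imply that $g$ is a non-negative weak super-solution of an equation of the same type on a cylindrical open set containing $\Qbext$, with source term still $0$ and drift coefficient now bounded by $m^{1/2}\Lambda$; this causes no harm, since $m$ is a fixed integer and the constants of Lemma~\ref{l:pop} depend only on $d$, $\theta$ and the ellipticity constants (and $R$, which originates from Lemma~\ref{l:max}, does not depend on $\lambda,\Lambda$ at all). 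Because $\psi$ multiplies Lebesgue measure by the fixed factor $m^{1+2d}$ and $\psi(\Qpos)=Q_1$, the assumption $|\{f\ge M\}\cap Q_1|\ge\tfrac12|Q_1|$ turns into
\[
  |\{g\ge M\}\cap\Qpos| = m^{-(1+2d)}\,|\{f\ge M\}\cap Q_1| \ge \tfrac12\,|\Qpos|,
\]
i.e.\ $|\{g/M\ge1\}\cap\Qpos|\ge\tfrac12|\Qpos|$. Applying Lemma~\ref{l:pop} to the non-negative super-solution $g/M$ (whose source vanishes, hence lies below $\eta_0$) yields $g/M\ge\ell_0$ in $Q_1$, so the choice $M:=\ell_0^{-1}>1$ --- which depends only on $m,d,\lambda,\Lambda$ --- gives $g\ge1$ in $Q_1$.

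Unwinding the definition of $g$, the inequality $g\ge1$ in $Q_1$ says exactly that $f\ge1$ on $\psi(Q_1) = (0,m]\times B_{m^{3/2}}\times B_{m^{1/2}}$. Since $m\ge3$ forces $m^{3/2}\ge m+2$ and $m^{1/2}\ge1$, this set contains $\bar Q_1^m = (0,m]\times B_{m+2}\times B_1$, and therefore $f\ge1$ in $\bar Q_1^m$, as claimed. There is no serious obstacle: the whole argument is one step of expansion of positivity, and the only points needing attention are the bookkeeping of the change of variables $\psi$ --- so that the cylinders $\Qbext$, $\Qpos$, $Q_1$ align with the domain appearing in the statement --- together with the elementary inclusion $\bar Q_1^m\subseteq\psi(Q_1)$ valid for $m\ge3$. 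In particular no iteration is required, precisely because Lemma~\ref{l:pop} already expands a cylinder of radius $\theta=m^{-1/2}$ up to the full unit cylinder $Q_1$.
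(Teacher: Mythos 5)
Your proposal is correct and follows essentially the same route as the paper: a single application of Lemma~\ref{l:pop} with $\theta=m^{-1/2}$ after the rescaling/translation that sends $\Qpos$ to $Q_1$ and $Q_1$ to $(0,m]\times B_{m^{3/2}}\times B_{m^{1/2}}\supset\bar Q_1^m$, concluding with $M=1/\ell_0$. The paper leaves the change of variables implicit (``thanks to a rescaling argument''), whereas you carry out the bookkeeping explicitly, including the harmless growth of the drift bound; this is the same proof.
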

\begin{proof}
  Let $\theta=m^{-\frac{1}{2}}$ so that  $\bar Q_1^m \subset (0,\theta^{-2}] \times B_{\theta^{-3}} \times  B_{\theta^{-1}}$.  We apply Lemma~\ref{l:pop} to $\frac{f}{M}$ with $Q_1$ and $\bar Q_1^m$ taking the role of $\Qpos$ and $Q_1$ thanks to a rescaling argument. This yields that $f \ge \ell_0M$  in $(0,\theta^{-2}] \times B_{\theta^{-3}} \times B_{\theta^{-1}}$.  We
  then pick $M = 1/\ell_0$ and we conclude the proof.
\end{proof}

When deriving the weak Harnack inequality, we will need to estimate
how the lower bound deteriorates with time.  Indeed such an
information is needed in the Ink-spots theorem: since cylinders can
``leak'' out of the set $F$, a corresponding error has to be
estimated, see the term $Cm r_0^2$ in Theorem~\ref{t:ink-spot}.  The
geometric setting is the one from Theorem~\ref{t:weak-harnack}. In
particular, recall that
$Q_+ = (-\omega^2,0] \times B_{\omega^3} \times B_\omega$ and
$Q_- = (-1,-1+\omega^2] \times B_{\omega^3} \times B_\omega$ where
$\omega$ is small and universal. It has to be small enough so that
when spreading positivity from a cylinder $Q_r(z_0)$ from the past,
i.e. included in $Q_-$, the union of the stacked cylinders where positivity is expanded captures $Q_+$.
Then the radius $R_0$ in the statement of weak Harnack inequality is
chosen so that the expansion of positivity lemma can be applied as
long as new cylinders are stacked over previous ones.  These two
facts are stated precisely in the following lemma. 

In order to avoid the situation where the last stacked cylinder
(see $Q[N+1]$ in the next lemma) leaks out of the domain where the
equation is satisfied, we choose it in a way that we can use
the information obtained in the previous cylinder $Q[N]$: the ``predecessor'' of $Q[N+1]$ is contained in $Q[N]$ (see Figure \ref{fig:stackedcylinders}).  
\begin{lemma}[Stacking cylinders]\label{l:cylinder}
  Let $\omega < 10^{-2}$.  Given any non-empty cylinder
  $Q_r (z_0) \subset Q_-$, let $T_k = \sum_{j=1}^k (2^jr)^2$ and
  $N \ge 1$ such that $ T_N \le -t_0 < T_{N+1}$.  Let
\begin{align*}
     Q[k] &=  Q_{2^kr}(z_k)  \text{ for } k =1,\dots, N,\\
  Q [N+1] &= Q_{R_{N+1}}(z_{N+1})
\end{align*}
where $z_k = z_0  \circ (T_k,0,0)$ and, letting $R =|t_0+T_N|^{\frac12}$ and $\rho = (4\omega)^{\frac13}$, $R_{N+1} = \max(R,\rho)$ and 
  \[
  z_{N+1} =
  \begin{cases}
  z_N \circ (R^2, 0,0)  & \text{ if } R \ge \rho, \\
  (0,0,0) & \text{ if } R < \rho.
\end{cases}
\]
These cylinders satisfy
\[
  Q_+ \subset Q[N+1], \qquad 
  \bigcup_{k=1}^{N+1} Q[k]  \subset (-1,0] \times B_{2} \times B_{2}, \qquad   Q[N] \supset \tilde Q[N] 
\]
where $\tilde Q[N]$ is the ``predecessor'' of $Q[N+1]$:
\(\tilde Q[N] = Q_{R_{N+1}/2} (z_{N+1} \circ (-R_{N+1}^2, 0,0)).\)
\end{lemma}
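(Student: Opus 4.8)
The plan is to verify the three claimed properties by direct geometric computation, using the group law $z_1 \circ z_2 = (t_1+t_2, x_1+x_2+t_2 v_1, v_1+v_2)$ and tracking centers, radii, and the associated velocity shifts carefully. Throughout, write $z_0 = (t_0,x_0,v_0)$ with $|v_0| < \omega$, $|x_0 - (t_0+1)v_0| < \omega^3$ coming from $Q_r(z_0) \subset Q_-$, and note $Q_r(z_0) \neq \emptyset$ forces $r \le $ something small so that $2^{N+1} r$ is controlled. The key preliminary observation is that $T_{k+1} = 4T_k + (2r)^2 \cdot \tfrac{4^{k+1}-4}{3}$-type identities let us write $T_k = \tfrac{4}{3}(4^k-1)r^2$, so $T_N \le |t_0| \le 1$ gives $4^N r^2 \lesssim 1$, hence each radius $2^k r$ ($k\le N$) and $R_{N+1} = \max(|t_0+T_N|^{1/2},\rho)$ are all $O(1)$; more precisely $2^k r \le 2^{N+1} r \lesssim 1$ and $R < 2^{N+1}r$.

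First I would prove $Q_+ \subset Q[N+1]$. If $R < \rho$ then $z_{N+1} = 0$ and $R_{N+1} = \rho = (4\omega)^{1/3}$, so $Q[N+1] = Q_\rho(0) = (-\rho^2,0] \times B_{\rho^3} \times B_\rho = (-(4\omega)^{2/3},0] \times B_{4\omega} \times B_{(4\omega)^{1/3}}$, which contains $Q_+ = (-\omega^2,0] \times B_{\omega^3} \times B_\omega$ for $\omega$ small (since $(4\omega)^{2/3} > \omega^2$, $4\omega > \omega^3$, $(4\omega)^{1/3} > \omega$). If $R \ge \rho$ then $z_{N+1} = z_N \circ (R^2,0,0) = z_0 \circ (T_N + R^2, 0,0) = z_0 \circ (|t_0|, 0, 0)$ since $T_N + R^2 = T_N + |t_0 + T_N| = |t_0|$ (using $T_N \le |t_0|$, i.e. $t_0 + T_N \le 0$). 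Hence $z_{N+1} = (t_0 + |t_0|, x_0 + |t_0| v_0, v_0) = (0, x_0 + |t_0|v_0, v_0)$, and $R_{N+1} = R \ge \rho = (4\omega)^{1/3}$. Then checking $Q_+ \subset Q_{R_{N+1}}(z_{N+1})$ reduces, via the definition of $Q_{R_{N+1}}(z_{N+1})$, to: for $z = (t,x,v) \in Q_+$ one needs $-R_{N+1}^2 < t - 0 \le 0$ (clear as $t \in (-\omega^2,0]$ and $R_{N+1}^2 \ge (4\omega)^{2/3} > \omega^2$), $|v - v_0| < R_{N+1}$ (clear, $|v| < \omega$, $|v_0| < \omega$, $R_{N+1} > (4\omega)^{1/3}$), and $|x - x_{N+1} - t \, v_0| < R_{N+1}^3$. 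Here $x - x_{N+1} - t v_0 = x - x_0 - |t_0| v_0 - t v_0 = (x - x_0 - (t - t_0) v_0) + (t_0 + |t_0|) v_0 - \dots$; one expands and bounds each piece by $O(\omega^3)$ or $O(\omega)\cdot O(1)$ using $R_{N+1}^3 \ge 4\omega$, $|v_0| < \omega$, and $|x_0 - x - (t-t_0)v_0|$ controlled by $Q_r(z_0) \subset Q_-$ plus $z \in Q_+$. The $\omega < 10^{-2}$ smallness absorbs the multiplicative constants.

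Second, for $\bigcup_{k=1}^{N+1} Q[k] \subset (-1,0] \times B_2 \times B_2$: the time interval is immediate since all $Q[k]$ sit in $(t_0, 0] \supset (-1,0]$ by construction (their top times are $t_0 + T_k \le t_0 + |t_0|\le 0$ and bottoms $> t_0 \ge -1$). For the spatial and velocity containments, for $z \in Q[k] = Q_{2^k r}(z_k)$ with $z_k = z_0 \circ (T_k, 0,0) = (t_0 + T_k, x_0 + T_k v_0, v_0)$, one has $|v - v_0| < 2^k r$, so $|v| < \omega + 2^k r \le \omega + 2^{N+1} r \le 2$ once we show $2^{N+1}r$ is small; and $|x - x_k - (t - (t_0+T_k)) v_0| < (2^k r)^3$, from which $|x| \le |x_0| + T_k |v_0| + (2^k r)^3 + |t - t_0 - T_k||v_0|$, all terms bounded using $|x_0| < \omega^3 + |t_0+1||v_0| < 2\omega^3$, $T_k \le 1$, $|v_0| < \omega$, $(2^k r)^3 \le (2^{N+1}r)^3$. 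The bound $2^{N+1} r \le C$: from $T_N \le |t_0| \le 1$ and $T_N = \tfrac{4}{3}(4^N - 1) r^2 \ge \tfrac{4}{3}\cdot\tfrac{3}{4} 4^N r^2 = 4^N r^2$ for $N \ge 1$... actually $T_N \geq (2^N r)^2 = 4^N r^2$ directly since $T_N = \sum_{j=1}^N (2^j r)^2 \ge (2^N r)^2$; hence $2^N r \le 1$, so $2^{N+1} r \le 2$, giving all the needed smallness (and with $\omega < 10^{-2}$ the additive $\omega$-terms are negligible, so indeed $< 2$). Also $R_{N+1} \le \max(|t_0 + T_N|^{1/2}, (4\omega)^{1/3}) \le \max(1, 1) = 1$, handling $Q[N+1]$.

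Third, $Q[N] \supset \tilde Q[N]$ where $\tilde Q[N] = Q_{R_{N+1}/2}(z_{N+1}\circ(-R_{N+1}^2,0,0))$. Note $z_{N+1}\circ(-R_{N+1}^2,0,0)$ has time-component $t_{N+1} - R_{N+1}^2$. Split into the two cases. If $R \ge \rho$: $z_{N+1} = z_N \circ (R^2,0,0)$, so $z_{N+1} \circ (-R^2,0,0) = z_N \circ (R^2,0,0)\circ(-R^2,0,0) = z_N$ (since $(R^2,0,0)\circ(-R^2,0,0) = (0,0,0)$), and $R_{N+1} = R = |t_0 + T_N|^{1/2}$. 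So $\tilde Q[N] = Q_{R/2}(z_N)$, and we need $Q_{R/2}(z_N) \subset Q_{2^N r}(z_N)$, i.e. $R/2 \le 2^N r$, i.e. $R \le 2^{N+1} r$, i.e. $|t_0 + T_N| \le 4^{N+1} r^2 = (2^{N+1} r)^2$. Since $t_0 + T_N < T_{N+1} + t_0 \le$ ... actually $-t_0 < T_{N+1}$ gives $|t_0 + T_N| = -t_0 - T_N < T_{N+1} - T_N = (2^{N+1} r)^2$, done. If $R < \rho$: $z_{N+1} = 0$, $R_{N+1} = \rho$, so $\tilde Q[N] = Q_{\rho/2}((0,0,0)\circ(-\rho^2,0,0)) = Q_{\rho/2}((-\rho^2,0,0))$; since $-\rho^2 = -(4\omega)^{2/3}$ is small and $z_N = (t_0+T_N, x_0 + T_N v_0, v_0)$, one checks $Q_{\rho/2}((-\rho^2,0,0)) \subset Q_{2^N r}(z_N)$ by comparing time intervals ($(-\rho^2 - (\rho/2)^2, -\rho^2] \subset (t_0 + T_N - 4^N r^2, t_0 + T_N]$, using $R < \rho$ i.e. $-t_0 - T_N < \rho^2$ so $t_0 + T_N > -\rho^2$, and $4^N r^2 \ge \rho^2$ needs care — in fact when $R < \rho$ it must be that $\rho$ is the binding radius and one uses that $Q_r(z_0)\subset Q_-$ with the stacking has already reached near $t = 0$, so $2^N r \gtrsim$ something; if instead $4^N r^2 < \rho^2$ one notes $R_{N+1} = \rho$ was chosen precisely so that $R_{N+1}/2 \le 2^N r$ may fail, in which case the intended reading is that $\tilde Q[N]$ with the half-radius still fits because $\rho = (4\omega)^{1/3}$ is tiny) together with the spatial/velocity inclusions, which hold because the center displacement $|x_0 + T_N v_0|$, $|v_0|$ are $O(\omega^3)$, $O(\omega)$ and $\rho$, being an absolute small constant, dominates after accounting for constants.

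\medskip

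\noindent\textbf{Main obstacle.} The genuinely delicate point is the third property in the regime $R < \rho$: one must confirm that the half-radius predecessor $\tilde Q[N]$, whose radius $R_{N+1}/2 = \rho/2$ is a fixed small constant, really is contained in $Q[N] = Q_{2^N r}(z_N)$, whose radius $2^N r$ could a priori be smaller than $\rho/2$. Resolving this requires using the defining inequality $T_N \le -t_0 < T_{N+1}$ together with $Q_r(z_0) \subset Q_- = (-1, -1+\omega^2] \times \cdots$ — i.e. $t_0 \le -1 + \omega^2 + r^2$ roughly — to force $T_N$ (hence $2^N r$) to be of order $1$, which then makes $2^N r \ge \rho/2$ for $\omega$ small. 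The other computations are routine triangle-inequality bookkeeping with the group law, where the running theme is that $\omega < 10^{-2}$ makes every multiplicative constant harmless; I would organize them as a short sequence of displayed inequalities rather than belaboring each term.
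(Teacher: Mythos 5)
Your plan is correct and follows essentially the same route as the paper: direct verification of the three inclusions via the group law, using $T_N\le -t_0\le 1$ to get $2^N r\le 1$, $-t_0<T_{N+1}$ to get $R<2^{N+1}r$ (hence $\tilde Q[N]\subset Q[N]$ when $R\ge\rho$), and, for the delicate case $R<\rho$, the lower bound on $2^N r$ forced by $-t_0\ge 1-\omega^2$ together with $-t_0<T_{N+1}$ — which is exactly how the paper resolves it ($4^Nr^2\ge 1/8$, so $2^Nr\ge 1/(2\sqrt2)\gg\rho$). The tentative alternative you float mid-sketch for that case (that the half-radius predecessor "still fits because $\rho$ is tiny" even if $2^Nr<\rho/2$) would not work, but your concluding paragraph correctly discards it in favour of the right argument.
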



With such a technical lemma at hand, expansion of positivity for large times follows easily. 
\begin{lemma}[Expansion of positivity for large times]\label{l:minima-large-times}
  Let $R_{1/2}$ given by Lemma~\ref{l:pop} with $\theta =1/2$.
  There exist a universal constant $p_0>0$ such that, if $f$ is a
  non-negative weak super-solution of \eqref{e:main} with $S=0$ in
  some cylindrical open set $\Omega \supset Q=(-1,0] \times B_{18 R_{1/2}} \times B_{6 R_{1/2}}$ such
  that
  \[
    |\{ f \ge A \} \cap Q_r (z_0)| \ge \frac12 |Q_r(z_0)|
  \]
  for some $A>0$ and for some cylinder $Q_r(z_0) \subset Q_-$, then
  $f \ge A (r^2/4)^{p_0}$ in $Q_+$. 
\end{lemma}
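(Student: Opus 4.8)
The plan is to iterate the one-step expansion of positivity (Lemma~\ref{l:pop}, with $\theta=1/2$) along the chain of stacked cylinders produced by Lemma~\ref{l:cylinder}, and to bookkeep how the lower bound on $f$ decays at each step. First I would normalize: given the cylinder $Q_r(z_0)\subset Q_-$ on which $\{f\ge A\}$ occupies at least half the measure, apply Galilean and parabolic scaling (using $S_r$ and $\circ$) to move $Q_r(z_0)$ to $Q_1$ and $f$ to $\tilde f = f(z_0\circ S_r(\cdot))/A$; the normalized function is a non-negative super-solution with $S=0$ in a suitable cylindrical neighborhood, and $|\{\tilde f\ge 1\}\cap Q_1|\ge\frac12|Q_1|$. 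One must check that the ambient cylinder $\Omega\supset Q=(-1,0]\times B_{18R_{1/2}}\times B_{6R_{1/2}}$ is large enough, after scaling, to contain the $\Qbext$-type domains required by Lemma~\ref{l:pop} at every stage — this is exactly what the radius $18R_{1/2}=2\cdot 9R_{1/2}$, $6R_{1/2}=2\cdot 3R_{1/2}$ and the inclusion $\bigcup_k Q[k]\subset(-1,0]\times B_2\times B_2$ from Lemma~\ref{l:cylinder} are designed to guarantee.

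Next I would run the iteration. Set $Q[1],\dots,Q[N],Q[N+1]$ as in Lemma~\ref{l:cylinder} (in the rescaled picture, with $Q_r(z_0)$ playing the role of the initial cylinder). Starting from $|\{\tilde f\ge1\}\cap Q_r(z_0)|\ge\frac12|Q_r(z_0)|$, Lemma~\ref{l:pop} applied at scale $2r$ (after rescaling $Q_r(z_0)\to\Qpos$, $Q[1]=Q_{2r}(z_1)\to Q_1$) yields $\tilde f\ge \ell_0$ on $Q[1]$, hence trivially $|\{\tilde f\ge\ell_0\}\cap Q[1]|\ge\frac12|Q[1]|$. Iterating: if $\tilde f\ge \ell_0^{k-1}$ on $Q[k]$, then the half-measure hypothesis holds on a sub-cylinder of $Q[k]$ of the right shape, and Lemma~\ref{l:pop} gives $\tilde f\ge \ell_0^{k}$ on $Q[k+1]$. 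After $N+1$ steps we obtain $\tilde f\ge \ell_0^{N+1}$ on $Q[N+1]\supset Q_+$ (the inclusion $Q_+\subset Q[N+1]$ is precisely the first conclusion of Lemma~\ref{l:cylinder}). For the very last step one uses the refinement $Q[N]\supset\tilde Q[N]$ — the predecessor of $Q[N+1]$ sits inside $Q[N]$ — so that the lower bound already established on $Q[N]$ furnishes the half-measure hypothesis needed to run Lemma~\ref{l:pop} into $Q[N+1]$ without leaving the domain where the equation holds.

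Finally I would convert $\ell_0^{N+1}$ into the claimed power of $r^2/4$. From $T_N=\sum_{j=1}^N(2^jr)^2=\tfrac43 r^2(4^N-1)\le -t_0\le 1$ one reads off an upper bound $N+1\le C\log_4(1/r^2)+C'$ for universal $C,C'$, so $\ell_0^{N+1}\ge c\,(r^2)^{q_0}$ for a universal $q_0>0$; rewriting $r^2=4\cdot(r^2/4)$ and absorbing the constant $4^{q_0}c$ into a slightly larger exponent (or shrinking to a universal $p_0$, using $r\le\omega<10^{-2}$ so $r^2/4<1$) gives $\tilde f\ge (r^2/4)^{p_0}$ on $Q_+$, i.e. $f\ge A(r^2/4)^{p_0}$ on $Q_+$ after undoing the normalization. \textbf{The main obstacle} is the geometric bookkeeping: one must verify that at each stage the cylinder where the half-measure information is available contains a sub-cylinder of the precise shape $\Qpos=(-1-\theta^2,-1]\times B_{\theta^3}\times B_\theta$ (up to the group action) sitting correctly below the target $Q_1$, that the dilated ``equation domain'' $\Qbext$ stays inside the fixed neighborhood $\Omega\supset Q$ for \emph{all} $k\le N+1$, and that the doubling radii $2^k r$ together with the terminal adjustment $R_{N+1}=\max(R,\rho)$ interlock as in Lemma~\ref{l:cylinder}; the decay estimate itself is then a routine geometric-series computation.
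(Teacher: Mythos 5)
Your proposal follows the paper's proof essentially verbatim: iterate Lemma~\ref{l:pop} with $\theta=1/2$ along the stacked cylinders $Q[1],\dots,Q[N+1]$ of Lemma~\ref{l:cylinder}, use $Q_+\subset Q[N+1]$ together with the predecessor inclusion $\tilde Q[N]\subset Q[N]$ for the last step, and convert $\ell_0^{N+1}$ into the power $(r^2/4)^{p_0}$ via $4^N r^2\le 1$. The paper's final bookkeeping is marginally cleaner --- it defines $p_0$ by $\ell_0=(1/4)^{p_0}$ so that $\ell_0^{N+1}=\left((1/4)^{N+1}\right)^{p_0}\ge (r^2/4)^{p_0}$ directly, with no constants to absorb --- but your argument is correct and identical in substance.
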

\begin{proof}
We first apply Lemma~\ref{l:pop} with $\theta=1/2$
to $f/A$ (after rescaling $Q_r(z_0)$ into $\Qpos$) and get
$f/A \ge \ell_0$ in $Q[1]$. We then apply it to $f / (A \ell_0)$ and
get $f \ge A \ell_0^2$ in $Q[2]$. By induction, we get
$f \ge A \ell_0^k$ in $Q[k]$ for $k=1,\dots, N$.

We then apply Lemma~\ref{l:pop} one more time and get
$f \ge A \ell_0^{N+1}$ in $Q[N+1]$ and in particular
$f \ge A \ell_0^{N+1}$ in $Q_+$. Since $T_N \le 1$, we have
$4^N r^2 \le 1$. Choosing $p_0>0$ such that $\ell_0 = (1/4)^{p_0}$, we
get $f \ge A ((1/4)^{N+1})^{p_0} \ge A (r^2/4)^{p_0}$.
\end{proof}

\begin{figure}[h]
\centering{\includegraphics[height=7cm]{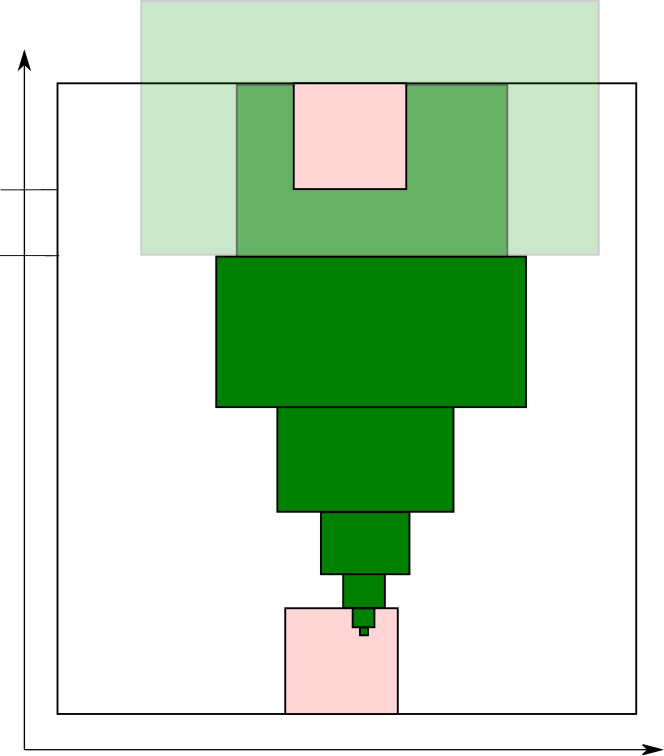}}
\put(-195,147){$-\omega^2$}
\put(-213,130){$t_0+T_N$}
\put(10,0){$(x,v)$}
\put(-180,180){$t$}
\put(-95,20){$Q_-$}
\put(-95,160){$Q_+$}
\put(-90,110){$Q[N]$}
\put(-100,137){$Q[N+1]$}
\caption{{\it Stacking cylinders above an initial one contained in
    $Q_-$.} We see that the stacked cylinder obtained after $N+1$
  iterations by doubling the radius leaks out of the domain. This is
  the reason why $Q[N+1]$ is chosen in a way that it is contained in
  the domain and its ``predecessor'' is contained in $Q[N]$. Notice
  that the cylinders $Q[k]$ are in fact slanted since they are not
  centered at the origin. We also mention that $Q[N+1]$ is choosen
  centered if the time $t_0+T_N$ is too close to the final time $0$.}
  \label{fig:stackedcylinders}
\end{figure}

We finally turn to the proof of the main result of this paper,
Theorem~\ref{t:weak-harnack}.
\begin{proof}[Proof of Theorem~\ref{t:weak-harnack}]
  We start the proof with general comments about the geometric
  setting. The proof is going to use a covering argument through the
  application of the Ink-spots theorem. To apply this result, we will
  consider an arbitrary cylinder $Q$ contained in $Q_-$. The
    parameter $\omega$ used in the definition of $Q_-$ and $Q_+$ is
    chosen small enough ($\omega\leq 10^{-2}$) so that the cylinder
    $Q_+$ is ``captured'' when stacking cylinders
    (Lemma~\ref{l:cylinder}) and propagating positivity
    (Lemma~\ref{l:minima-large-times}). We also pick the parameter
    $R_0$ in the definition of the cylinder $Q^0$ large enough so that
    the stacked cylinders do not leak out of $Q^0$; we impose
    $R_0 \ge 18 R_{1/2}$ where $R_{1/2}$ is given by Lemma~\ref{l:pop}
    for $\theta =1/2$. We also impose
    $R_0 \ge 9 R_{m^{-1/2}} m^{3/2} \omega^3$ where $R_{m^{-1/2}}$ is
    given by Lemma~\ref{l:pop} with $\theta = m^{-1/2}$ in order to be
    in position to apply Lemma~\ref{l:minima-measure} to cylinders
    contained in $Q^-$, hence of radius smaller than $\omega$. 
  \medskip

  We first classically reduce to the case
  \[ \inf_{Q_+} f \le 1 \quad \text{ and } \quad S=0. \] Considering
  $\tilde{f}(t,x,v)=f(t,x,v)+\|S \|_{L^{\infty}} t$,
  $\tilde{f}$ is a super-solution of the same equation with no source
  term ($S=0$) and the weak Harnack inequality for $\tilde{f}$ implies
  the one for $f$.  So from now we assume $S=0$.  Considering next
  $\tilde{f} = f / (\inf_{Q_+} f +1)$ reduces to the case $\inf_{Q_+} f \le 1$.

  We then aim at proving that $\int_{Q_-} f^p (z) \dz$ is bounded from
  above by a universal constant for some universal exponent $p$. 
  This  amounts to prove that for all $k \in \mathbb{N}$,
  \[ |\{ f > M^k\} \cap Q_- | \le \Cwhi (1-\tilde \mu)^k \] for some
  universal parameters $\tilde\mu \in (0,1)$, $M>1$ and $\Cwhi>0$ to be
  determined later. We can see that this property would be enough by
  transposing it to the continuous case ($k$ real and above $1$) and
  by application of the layer cake formula to $\int_{Q_-} f^p (z) \dz$.

  We are going to apply Theorem~\ref{t:ink-spot} with $\mu =1/2$. We pick
  $m \in \mathbb{N}$ such that $\frac{m+1}m (1-c/2) \le 1 -c/4$.  Then
  the constant $M>1$ is given by Lemma~\ref{l:minima-measure}.
  \medskip
  
  We prove the result by induction. For $k=1$, we simply choose
  $\tilde\mu \le 1/2$  and $\Cwhi$ such that
  $|Q_-| \le \frac{1}{2}\Cwhi$. Now assume that the claim holds true for
  $k \ge 1$ and let us prove it for $k+1$.  We thus consider
  \[ E = \{ f > M^{k+1} \} \cap Q_- \quad \text{ and } \quad F = \{ f > M^k \} \cap Q_1. \]
  These two sets are bounded and measurable and such that $E \subset F \cap Q_-$.
  We consider a cylinder $Q =Q_r (t,x,v) \subset Q_-$ such that $|Q \cap E | > \frac12 |Q|$, that is to say
  \[ |\{f > M^{k+1} \} \cap Q | > \frac12 |Q|.\]

  We first prove that $r$ is small, i.e. we determine a universal
  $r_0$ which depends on $k$ such that $r < r_0$.
  Lemma~\ref{l:minima-large-times} (after translation in time) implies
  that \( f \ge M^{k+1} (r^2/4)^{p_0} \) in $Q_{+}$. In particular,
  $1 \ge \inf_{Q_+} f \ge M^{k+1} (r^2/4)^{p_0}$ so
  $r^{2p_0}\leq 4^{p_0} M^{-(k+1)}$.  We thus choose $r_0 = 2 M^{-\frac{k+1}{2p_0}}$.

  We next prove that $\bar Q^m \subset F$, i.e.
  \[ \bar Q^m \subset \{ f > M^k\} .\] In order to do so, we 
  apply Lemma~\ref{l:minima-measure} to $f/M^k$ after rescaling $Q$ in
  $Q_1$ where we assume $\omega \leq (2m+3)^{-1/2}$ to be able to rescale.

  By Theorem~\ref{t:ink-spot}, we conclude thanks to the induction assumption that
  \begin{align*}
    |\{ f > M^{k+1} \} \cap Q_-| &\le  (1-c/4) \bigg(\Cwhi (1 -\mu)^k +  C m r_0^2 \bigg)  \\
                                 & \le  (1-c/4) \bigg(\Cwhi (1 -\mu)^k +  C m M^{-\frac{k+1}{p_0}} \bigg).  \\
    \intertext{Then pick $\tilde \mu$ small enough so that $M^{-1/p_0} \le (1- \mu)$ and $\tilde\mu \le \frac{c}{4}$ and get,}
                                 & \le \Cwhi (1-c/4) \bigg(1  + \Cwhi^{-1} C m M^{-\frac{1}{p_0}} \bigg) (1 -\tilde\mu)^k.  \\
    \intertext{Now pick $\Cwhi$ large enough (depending on $c$, $C$, $m$ and $M^{-1/p_0}$) and get,}
    & \le \Cwhi (1 - c/4) (1 -\tilde\mu)^k\\
     & \le \Cwhi (1 -\tilde\mu)^{k+1}. 
  \end{align*}
  The proof is now complete.
\end{proof}
The full Harnack inequality is a direct consequence of the local
boundedness of sub-solutions and the weak Harnack inequality.
\begin{proof}[Proof of Theorem~\ref{t:harnack}]
 Combine Proposition~\ref{p:l2linfty} and
  Theorem~\ref{t:weak-harnack} and rescale to reach the result.  See for example \cite{LZ17} for more details. 
\end{proof}

\appendix

\section{Appendix: the Ink-spots theorem}
\label{a:ink}

In order to state the Ink-spots theorem, we need to define stacked
cylinders.  Given $Q = Q_r (t,x,v)$ and $m \in \mathbb{N}$, $\bar Q^m$
denotes the cylinder
$\{ (t,x,v): 0<t-t_0\le m r^2, |x-x_0 - (t-t_0)v_0| < (m+2) r^{3},
|v-v_0| < r \}$.

\begin{theorem}[Ink-spots -- \cite{is}] \label{t:ink-spot} Let $E$ and
  $F$ be two bounded measurable sets of $\R \times \R^{2d}$ with
  $E \subset F \cap Q_-$. We assume that there exist two constants
  $\mu, r_0 \in (0,1)$ and an integer $m \in \mathbb{N}$ such that for
  any cylinder $Q \subset Q_-$ of the form $Q_r (z_0)$ such that
  $|Q \cap E | \ge (1-\mu)|Q|$, we have $\bar Q^m \subset F$ and
  $r < r_0$.  Then
  \[ |E| \le \frac{m+1}m (1-c\mu) \bigg( |F \cap Q_-| + C m r_0^2 \bigg)\]
  where $c \in (0,1)$ and $ C>0$ only depend on dimension $d$. 
\end{theorem}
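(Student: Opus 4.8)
The plan is to prove this measure‑theoretic statement by the stopping‑time (“growing ink‑spots’’) argument of Krylov and Safonov, adapted to the Galilean geometry of kinetic cylinders exactly as in \cite{is}. First I would fix a dyadic family of kinetic cylinders refining $Q_-$: starting from $Q_-$ itself (which is of the form $Q_r(z_0)$), each cylinder $Q_r(z_0)$ is split, up to a null set, into $2^{2+4d}$ children of the form $Q_{r/2}(\zeta_j)$ obtained by composing on the left with a translation and applying the scaling $S_{1/2}$; the children $\zeta_j$ are chosen so that the forward stack $\bar Q^m$ of a child is contained in the forward stack of its predecessor, which makes the dyadic tree compatible with the hypothesis of the theorem. (Since Euclidean balls do not tile, one either replaces the balls in the definition of cylinders by cubes throughout — the two classes being comparable — or accepts a fixed bounded overlap in this partition; either way the combinatorics below is unaffected.) Along this martingale filtration the Lebesgue differentiation theorem gives, for almost every $z\in E$, that $|E\cap D_k(z)|/|D_k(z)|\to 1$, where $D_k(z)$ is the generation‑$k$ dyadic cylinder containing $z$.

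Next I would run the stopping time at threshold $1-\mu$: for a.e.\ $z\in E$ let $D(z)$ be the dyadic cylinder of smallest generation $\ge 1$ with $|E\cap D(z)|>(1-\mu)|D(z)|$, and let $\widehat D(z)$ be its predecessor, so that minimality gives $|E\cap\widehat D(z)|\le(1-\mu)|\widehat D(z)|$. The distinct selected cylinders $\{Q^i\}$ are then pairwise disjoint, contained in $Q_-$ (the boundary layer near the top of $Q_-$ where a dyadic cylinder fails to fit has volume $\le C m r_0^2$ by the radius bound obtained below, and is swept into the error term), cover $E$ up to a null set, and satisfy $|E\cap Q^i|>(1-\mu)|Q^i|$; hence by hypothesis $\bar{(Q^i)}^{\,m}\subset F$ and the radius $r_i$ of $Q^i$ obeys $r_i<r_0$. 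Passing to the inclusion‑maximal predecessors $\{\widehat P^a\}$, these are pairwise disjoint, each has $E$‑density $\le 1-\mu$, and $\bigcup_i Q^i\subset\bigcup_a\widehat P^a$.

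The remaining and decisive point — the one with no elliptic analogue — is to use the forward inclusions $\bar{(Q^i)}^{\,m}\subset F$ to replace $|Q_-|$ by $|F\cap Q_-|$ on the right‑hand side. For this I would set up a Vitali‑type selection (processing cylinders from largest to smallest and discarding one whose forward stack is already essentially covered) so that the selected forward stacks $\bar{(Q^i)}^{\,m}$, equivalently the towers $Q^i\cup\bar{(Q^i)}^{\,m}$, have a bounded overlap depending only on $d$; they then pack inside $F$ except for a slab $\{-1+\omega^2<t\le -1+\omega^2+Cmr_0^2\}$ of bounded cross‑section, of volume $\le C m r_0^2$, into which their forward parts may leak. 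Combining this packing with the density deficit $|\widehat P^a\setminus E|\ge\mu|\widehat P^a|$ carried by the predecessors, and using that a tower has time‑length $(m+1)r_i^2$ while its stacked part has time‑length $m r_i^2$, one obtains
\[
|E|=\sum_i |E\cap Q^i|\le\sum_i |Q^i|\le \frac{m+1}{m}(1-c\mu)\Big(|F\cap Q_-|+C m r_0^2\Big),
\]
with $c\in(0,1)$ and $C>0$ depending only on dimension through the overlap and slab estimates. I expect this last step — arranging the covering so that the \emph{slanted} forward stacks have controlled overlap over the whole admissible range of radii, and checking that their leak out of $Q_-$ is genuinely of order $m r_0^2$ — to be the main obstacle; the dyadic construction and the density theorem are routine by comparison. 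The full details are exactly those of \cite{is}, whose argument I would follow.
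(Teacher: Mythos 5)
First, a point of comparison: the paper does not prove Theorem~\ref{t:ink-spot} at all. It is imported verbatim from \cite{is} (Corollary~10.1 there, with $s=1$ and $Q_-$ in place of $Q_1$), so there is no in-paper argument to measure yours against. Your submission is ultimately in the same spirit --- it closes by deferring ``the full details'' to \cite{is} --- but read as a standalone proof it has genuine gaps, the main one of which you flag yourself without resolving it.

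Concretely: (1) the dyadic Calder\'on--Zygmund structure on kinetic cylinders is posited, not constructed; moreover your auxiliary claim that each child's forward stack is contained in its parent's forward stack is false for children sitting at the bottom of the parent (their stacks begin before the parent's top time, so they overlap the parent itself rather than its stack). Fortunately this property is not needed, since the hypothesis of the theorem applies to \emph{every} high-density cylinder, not only to a nested family. (2) The decisive factor $(1-c\mu)$ is asserted rather than derived. From disjoint selected cylinders $Q^i$ of $E$-density greater than $1-\mu$ covering $E$ you only obtain $|E|\le\sum_i|Q^i|$; to gain $(1-c\mu)$ one must use that a stopped (maximal) cylinder, or a fixed dilation of it, has density at most $1-c\mu$, and then inject that deficit into $|F\cap Q_-|$. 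You introduce the predecessors $\widehat P^a$ precisely to carry this deficit, but they never reappear in the final chain of inequalities: they are neither shown to be comparable in measure to $\sum_i|Q^i|$ nor linked to the right-hand side. (3) The bounded-overlap packing of the \emph{slanted} stacks $\bar{(Q^i)}^{m}$ and the identification of the leakage with a region of measure $Cmr_0^2$ --- which is exactly the content of the covering lemmas of Section~10 of \cite{is} --- is named by you as ``the main obstacle'' and left open. So the proposal is a correct road map in the Krylov--Safonov tradition, but not a proof; the appropriate resolution is either to keep the citation, as the paper does, or to actually carry out the Vitali-type covering and stacking estimates of \cite{is} in this geometry.
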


\begin{remark}
  This corresponds to \cite[Corollary~10.1]{is} with $Q_-$ instead of $Q_1$, i.e. the Ink-spots
  theorem with leakage, with $s=1$.  Indeed, the statement in
  \cite{is} is more general since the cylinders are of the form
  $z_0 \circ Q_r$ with
  $Q_r = (-r^{2s},0] \times B_{r^{1+2s}} \times B_r$ for some
  $s \in (0,1]$. In the statement above, we only deal with $s=1$.
\end{remark}

\section{Appendix : local H\"older estimate}
\label{a:holder}

The H\"older estimate from Theorem~\ref{t:holder} is classically
obtained by proving that the oscillation of the solution decays by a
universal factor when zooming in. Such an improvement of oscillation
is obtained from Lemma~\ref{l:pop} with $\theta =1$.

Of course, it is not necessary to prove this lemma in order to prove
the Harnack inequality since the H\"older estimate can be derived from
it. Eventhough, we provide a proof to emphasize that it can be easily
derived from Lemma~\ref{l:pop}. 
\begin{lemma}[Decrease of oscillation]\label{l:osc}
Let $\bar R >0$ be such that $Q_{\bar R} \supset (-2,0] \times B_{9 R_1} \times B_{3R_1}$ with $R_1$ universal given by Lemma~\ref{l:pop} with $\theta =1$.
  There exist (small) universal constants $\eta_0,\ell_0>0$ such that for  any solution $f$ of \eqref{e:main} in some cylindrical open set  $\Omega \supset Q_{\bar R}$
  such that $0 \le f \le 2$ in $Q_{\bar R}$ and  $\|S\|_{L^\infty (Q_{\bar R})} \le \eta_0$, then  $\osc_{Q_1} f \le 2 -\ell_0$.
\end{lemma}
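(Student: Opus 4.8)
The plan is to reduce the decrease of oscillation to a dichotomy and then apply the expansion of positivity lemma (Lemma~\ref{l:pop}) with $\theta = 1$. Since $0 \le f \le 2$ in $Q_{\bar R}$, consider the sublevel set behaviour of $f$ inside the cylinder $\Qpos = (-2,-1] \times B_1 \times B_1$ (the $\Qpos$ attached to $\theta = 1$, up to the obvious relabelling of the time interval which only costs a harmless translation). One of the two functions $f$ or $2 - f$, both of which are non-negative super-solutions of an equation of the form \eqref{e:main} with right-hand side bounded by $\eta_0$ in absolute value, must be $\ge 1$ on at least half of $\Qpos$: indeed, if $|\{ f \ge 1\} \cap \Qpos| < \tfrac12 |\Qpos|$ then $|\{ f < 1 \} \cap \Qpos| > \tfrac12 |\Qpos|$, hence $|\{ 2 - f > 1 \} \cap \Qpos| > \tfrac12 |\Qpos|$, so in that case $2 - f \ge 1$ on more than half of $\Qpos$ — wait, this needs $2-f \ge 1$, i.e. $f \le 1$, which is what we have on that set up to the strict/non-strict distinction; replacing $1$ by a slightly smaller threshold if necessary, one of $f \ge 1$ or $2 - f \ge 1$ holds on a set of measure at least $\tfrac12|\Qpos|$.

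Next I would apply Lemma~\ref{l:pop} with $\theta = 1$ to whichever of $g := f$ or $g := 2 - f$ satisfies the measure condition. Both are non-negative super-solutions of \eqref{e:main} (for $2-f$ the diffusion matrix $A$ and drift $B$ are replaced by $A$ and $-B$, and the source by $-S$, all still satisfying \eqref{e:ellipticity} with the same constants and $\|{-S}\|_\infty = \|S\|_\infty \le \eta_0$), and the hypothesis $\|S\|_{L^\infty} \le \eta_0$ matches the smallness required in Lemma~\ref{l:pop}. The cylinder $\Qbext = (-2,0]\times B_{9R_1}\times B_{3R_1}$ associated with $\theta = 1$ is contained in $Q_{\bar R}$ by the choice of $\bar R$, so the lemma applies and yields $g \ge \ell_0$ in $Q_1$, for the universal constant $\ell_0 \in (0,1)$ furnished by Lemma~\ref{l:pop}. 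In the first case this gives $\inf_{Q_1} f \ge \ell_0$, hence $\osc_{Q_1} f \le 2 - \ell_0$; in the second case it gives $\sup_{Q_1} f \le 2 - \ell_0$, hence again $\osc_{Q_1} f \le 2 - \ell_0$. The constant $\eta_0$ is taken to be the one provided by Lemma~\ref{l:pop} (for $\theta = 1$), which is universal.

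The only mild subtlety — and the one point that deserves care rather than being a real obstacle — is the strict-versus-nonstrict threshold in the dichotomy: the set $\{f \ge 1\}$ and the set $\{f \le 1\}$ overlap on $\{f = 1\}$, so a naive splitting could in principle assign measure to both; this is resolved either by noting that if $|\{f=1\}\cap\Qpos|>0$ one still gets $\ge \tfrac12|\Qpos|$ for one side, or more cleanly by running the argument with the threshold $1$ for $\{f \ge 1\}$ and observing that its complement in $\Qpos$ is $\{f < 1\} \subset \{2 - f > 1\} \subset \{2 - f \ge 1\}$. A second routine check is that the time-translated cylinders $(-2,-1]\times\cdots$ versus the $(-1-\theta^2,-1]\times\cdots = (-2,-1]\times\cdots$ appearing in Lemma~\ref{l:pop} with $\theta = 1$ coincide, so no rescaling is needed here, only possibly a translation in time to line up $Q_1$ as in the statement of Lemma~\ref{l:pop}. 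Everything else is a direct invocation of Lemma~\ref{l:pop}.
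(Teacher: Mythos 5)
Your proof is correct and follows exactly the paper's argument: the same measure dichotomy on $(-2,-1]\times B_1\times B_1$, followed by applying Lemma~\ref{l:pop} with $\theta=1$ to either $f$ or $2-f$. (One negligible slip: for $2-f$ the coefficients $A$ and $B$ are unchanged and only $S$ flips sign, since $\nabla_v(2-f)=-\nabla_v f$ absorbs the sign in both the divergence and drift terms; this does not affect the argument.)
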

\begin{proof}
  Remark that either
  $| \{f \le 1 \} \cap (-2,-1] \times B_1 \times
  B_1 | \le \frac12 |(-2,-1] \times B_1 \times
  B_1 |$
  or
  $| \{f \le 1 \} \cap (-2,-1] \times B_1 \times
  B_1 | \ge \frac12 |(-2,-1] \times B_1 \times
  B_1 |$.
  In the former case, Lemma~\ref{l:pop} implies that $f \ge \ell_0$
  in $Q_1$ while in the latter, we simply consider $\tilde f = 2 -f$,
  apply Lemma~\ref{l:pop} to this new function and get
  $f \le 2 -\ell_0$ in $Q_1$. In both cases, we get the desired reduction
  of oscillation: $\osc_{Q_1} f \le 2 - \ell_0$.
\end{proof}

Deriving Theorem~\ref{t:holder} from Lemma~\ref{l:osc} is completely
standard but we provide details for the sake of completeness and for
the reader's convenience.
\begin{proof}[Proof of Theorem~\ref{t:holder}]
  Let $f$ be a solution of \eqref{e:main} in $Q_1$. By scaling, we can
  reduce to the case $\|f\|_{L^2(Q_{\bar R})} \le 1$ and
  $\|S\|_{L^\infty (Q_{\bar R })} \le \eta_0$ where $\eta_0$ is given by
  Lemma~\ref{l:osc}. We deduce from
  Proposition~\ref{p:l2linfty} that $f$ is bounded in $Q_{1}$.

 In order to prove that $f$ is H\"older continuous in
 $Q_{1/2}$, it is sufficient to prove that for all
 $z_0 \in Q_{1/2}$ and $r \in (0,1/(9R_1))$,
 \[ \osc_{Q_r (z_0)} f \le C_\alpha r^\alpha \] for some universal
 constants $\alpha \in (0,1)$ and   $C_\alpha$.

 We reduce to the case $z_0=0$ by using the invariance of the
 equation by the transformation $z \mapsto z_0 \circ z$ and we simply prove
 \[ \osc_{Q_{\bar R^{-k}}} f \le C (1-\delta_0)^k \] for some $C$ and
 $\delta_0 = \ell_0/2 \in (0,1)$ universal. By scaling, this amounts to
 prove that if $\osc_{Q_{\bar R}} f \le 2$   then
 $\osc_{Q_1} f \le 2( 1-\delta_0)$.  By considering
 $\tilde f = 1+ \frac{f}{\|f\|_{L^\infty(Q_{\bar R})} + \|S\|_{L\infty
     (Q_{\bar R})}/\eta_0}$, we can assume that $0 \le f \le 2$ and
 $|S| \le \eta_0$ in $Q_{\bar R}$. Remark that the $L^\infty$ bound of $S$ is
 reduced when zooming in. We now apply Lemma~\ref{l:osc} and conclude.
\end{proof}

\section{Appendix: stacking cylinders}
\label{a:cylinder}

\begin{proof}[Proof of Lemma~\ref{l:cylinder}]
We first check that the sequence of cylinders is well defined for $\omega <1/\sqrt5$, say. Since
$r \le \omega$, we have $t_0+T_1 \le -1 +\omega^2 + 4r^2 < 0$
and we know that there exists $N \ge 1$ such that
$T_N< -t_0 \le T_{N+1}$. 

We check next that $Q_+ \subset Q[N+1]$. If $R < \rho$, we simply remark that $\omega \le \rho$ to conclude.
In the other case, when $R \ge \rho$, we have to prove that $Q_\omega (z_{N+1}^{-1}) \subset Q_R$. In this case,
we have $z_{N+1}^{-1} = (0,x_0 -t_0 v_0,v_0)^{-1} = (0, -x_0 + t_0 v_0, -v_0)$ and for $z \in Q_\omega$,
\[ z_{N+1}^{-1} \circ z  = (t,-x_0+t_0 v_0 + x -tv_0, v-v_0) \in Q_R\]
if $\omega^2 \le R^2$ and $\omega^3 + \omega + \omega^3 + \omega^3 \le R^3$ and $2 \omega \le R$.
This is true for $4\omega \le R^3$ that is to say $\rho \le R$. 
\medskip

Let us now check that for all $k \in \{1,\dots, N+1\},$
$Q[k] \subset (-1,0] \times B_{2} \times B_2$.

As far as $Q[N+1]$
is concerned, we use the fact that $R = |t_0+T_N|^{\frac12} \le 1$ and
$\rho = (4\omega)^{\frac13} \le 1$ to get $R_{N+1} \le 1$. Moreover
$z_{N+1} \in Q_1$ and thus $Q[N+1] \subset (-1,0]\times B_2 \times B_2$.

We remark
$T_N \le -t_0 \le 1$ implies that $(2^Nr)^2 \le \frac34 +r^2 \le 1$ 
and in particular $2^N r \le 1$.  If $\bar z_k =(t_k,x_k,v_k) \in Q[k]$ for
$k \le N$ then there exists $(t,x,v) \in Q_1$ such that
$\bar z_k = z_0 \circ (T_k,0,0) \circ ((2^kr)^2t,(2^k r)^3x,2^k r v)$. This
implies that $x_k = x_0 +T_k v_0+ (2^kr)^2 t v_0 + (2^kr)^3 x$ and
$v_k = v_0 + 2^k r v$ and since $z_0 \in Q_-$,
\[ |x_k| \le \omega^3 +  2\omega + 1 \le 2 \text{ and } |v_k| \le \omega + 1 \le 2.\]
In particular $Q[k] \subset (-1,0] \times B_2 \times B_2$. 
\medskip

We are left with proving that $\tilde{Q}[N]  \subset Q[N]$.

If $R \ge \rho$, then the conclusion follows from the fact that $R/2 \le 2^N r$ (since $T_{N+1} >0$). 

Let us deal with the case $R \le \rho$. In view of the definitions of these cylinders, this is equivalent to
\[Q_{\rho/2} (\bar z) \subset Q_{2^N r} \text{ with } \bar z = (-T_N,0,0) \circ z_0^{-1} \circ (-\rho^2,0,0).\]

In order to prove this inclusion, we first estimate $2^N r$ from below. Since $t_0+T_{N+1} > 0$ and $-t_0 \ge 1 -\omega^2$, we have
$(4/3)(4^{N+1}-1)r^2 \ge 1-\omega^2$ and in particular $4^N r^2 \ge (1/4) (3/4 - 7/4 \omega^2) \ge 1/8$. We conclude
that
\begin{equation}\label{e:lower}
  2^N r \ge 1/(2\sqrt2).
\end{equation}

With such a lower bound at hand, we now compute $\bar z = (R^2-\rho^2, -x_0 + (t_0+\rho^2)v_0, -v_0)$ and get
for $z \in Q_{\rho/2}$,
\[ \bar z \circ z = (R^2-\rho^2 + t,  -x_0 + (t_0+\rho^2)v_0 + x -tv_0,v-v_0) \in Q_{2\rho}. \]
Indeed, $-2\rho^2 < R^2 -\rho^2 + t \le 0$ and $|-x_0 + (t_0+\rho^2)v_0 + x -tv_0| \le \omega^3 +3 \omega + (\rho/2)^3 \le 2 \rho^3$ and $|v-v_0|\le 2 \rho$.
It is thus sufficient to pick $\omega$ such that $\rho \le 1/(2\sqrt2)$ to get the desired inclusion.
This is true for $\omega \le 10^{-2}$. 
\end{proof}

\bibliographystyle{siam}
\bibliography{kk}

\end{document}